\title{Superelliptic degree sets over Henselian fields}
\author{Alexander Galarraga}
\address{Department of Mathematics,
         University of Washington,
         Seattle, WA, 98195}
\email{agalar@uw.edu}
\author{Alexander Wang}
\address{Department of Mathematics,
         University of Washington,
         Seattle, WA, 98195}
\email{awaw@uw.edu}
\subjclass[2020]{Primary: 14G05, 11G20. Secondary: 14G20.}
\theoremstyle{definition}
\newtheorem{definition}{Definition}[section]
\newtheorem{example}[definition]{Example}
\newtheorem{remark}[definition]{Remark}
\theoremstyle{plain}
\newtheorem{thm}[definition]{Theorem}
\newtheorem{lem}[definition]{Lemma}
\numberwithin{equation}{section}
\newcommand{\val}{v}
\DeclareMathOperator{\im}{im }
\DeclareMathOperator{\ind}{ind }
\pgfplotsset{compat=1.18}
\begin{document}

\begin{abstract}
    Let $K$ be a discretely valued Henselian field.  Creutz and Viray show that the degree set of a curve $C$ over a $p$-adic field can miss infinitely many multiples of the index of $C$, a phenomenon that cannot occur over finitely generated fields.  For curves $C/K$ with a cyclic cover of $\mathbb{P}^1$ of prime degree, under mild assumptions, we completely characterize how and when this behavior can occur, and give a method for computing degree sets of curves of this type.
\end{abstract}

\maketitle

\section{Introduction}

Given a variety $V/K$ over a field $K$, one would like to determine $V(K)$, the set of $K$-rational points. If it happens that $V(K)$ is empty, one would then like to determine for which extensions $L/K$ is $V(L)$ nonempty. Explicitly, define the degree set of $V/K$ by
\begin{align*}
    \mathcal{D}(V/K) \coloneqq \{\deg_K (P) \mid P \in V, \, P \text{ closed} \}
\end{align*}
where $\deg_K P$ denotes the degree of the residue field \textbf{k}$(P)$ over $K$, and the index of $V/K$ by $\ind(V/K) \coloneqq \gcd \mathcal{D}(V)$. We would like to determine the degree set of $V/K$.

When $K$ is a discretely valued Henselian field, determining the index and the degree set is possible. Gabber, Liu, and Lorenzini have shown that that the index over $K$ depends only on data related to the special fiber \cite{index-special-fiber}*{Theorem 8.2}, and Creutz and Viray showed that the degree set can be computed explicitly from the special fiber of a strict normal crossings model \cite{Degrees-on-Varieties}*{Theorem 1.1}. Thus, provided knowledge of the special fiber, one can compute the degree set. For a large class of curves with affine model $f(x,y) = 0$, \cite{Regular-Models}*{Theorem 1.1} gives an effective method, and for hyperelliptic curves with potential semistable reduction, \cite{models-of-hyperelliptic-curves}*{Theorem 1.2} describes the special fiber in terms of cluster diagrams. The special fiber, however, is difficult to compute in general.

Creutz and Viray give examples of curves $C/K$ with index $1$ having degree set that excludes infinitely many integers.  These degree sets do not arise over finite fields or global fields. Indeed, over finitely generated fields, the degree set contains all sufficiently large multiples of the index (e.g. \cite{new-points}*{Theorem 7.5}). We say that $\mathcal{D}(C/K)$ is \textbf{not cofinite} if the complement of $\mathcal{D}(C/K)$ in $\ind(C/K)\mathbb{N}$ is not finite.  We investigate when this behavior occurs for curves $C/K$ which are ``superelliptic" of prime degree (curves with a cyclic cover $C \to \mathbb{P}^1$ of prime degree $q$). Over fields of characteristic not $q$, superelliptic curves are birational to an affine plane curve of the form $y^q = F(x)$. When $q$ is 2, the curve is hyperelliptic.

Following \cite{cluster-diagrams}, we give an answer in terms of clusters. A \textbf{cluster} is a subset of the roots of $F(x)$ contained in an open disk. As the absolute Galois group $\text{Gal}(K^{\text{sep}}/K)$ acts on the roots of $F(x)$, there is a natural action of $\text{Gal}(K^{\text{sep}}/K)$ on clusters (see \zcref{defn: cluster}). Let $\mathcal{O}(\mathfrak{s})$ denote the orbit of a cluster $\mathfrak{s}$ under $\text{Gal}(K^{\text{sep}}/K)$, let $c_\mathfrak{s}$ be the integer defined in \zcref{lem: valuation of polynomial}, and let $\gamma_\mathfrak{s} \in K$ be defined as in \zcref{lem: shifting argument}. Note that $c_\mathfrak{s}$ and $\gamma_\mathfrak{s}$ are computable by hand from the roots of $F(x)$, see \zcref{ex: computed degree set}.

\begin{thm}\label{thm: main thm}
    Suppose that the residue field of $K$ is algebraically closed and that every root of $F(x)$ is tamely ramified. Then, $\mathcal{D}(C/K)$ is not cofinite if and only if $\val(F(0)) \not \equiv 0 \pmod q$, and for every Galois-invariant cluster $\mathfrak{s}$ of roots of $F(x)$,
    \begin{enumerate}[label=$(\roman*)$]
        \item $|\mathfrak{s}| \equiv 0 \pmod q$ and $c_{\mathfrak{s}} \not \equiv 0 \pmod{q}$, and
        \item if $\gamma_{\mathfrak{s}} \in X_{\mathfrak{s}}$, then $\val(F(\gamma_{\mathfrak{s}})) \not \equiv 0 \pmod{q}$.
    \end{enumerate}
    When these conditions are satisfied, we have that
    \begin{align*}
        \mathcal{D}(C/K) \subseteq q\mathbb{N} \cup \bigcup_{\substack{ \mathfrak{s} \, \text{\rm not Galois} \\ \text{\rm invariant}}} |\mathcal{O}(\mathfrak{s})| \mathbb{N}.
    \end{align*}
\end{thm}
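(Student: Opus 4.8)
The plan is to analyze the map $C \to \mathbb{P}^1$ and track degrees of closed points by computing, over each closed point of $\mathbb{P}^1$, the splitting behavior of the cyclic cover defined by $y^q = F(x)$. First I would recall that a closed point of $C$ lying over a closed point $x_0 \in \mathbb{P}^1$ has degree equal to $\deg_K(x_0)$ times the residue degree of the extension $\mathbf{k}(x_0)(\sqrt[q]{F(x_0)})/\mathbf{k}(x_0)$ (or $\mathbf{k}(x_0)$-points of the fiber when $x_0$ is a branch point), so that $\mathcal{D}(C/K)$ is essentially generated by $\deg_K(x_0)$ and $q\deg_K(x_0)$ as $x_0$ ranges over closed points of $\mathbb{P}^1_K$. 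Since $\mathbb{P}^1_K$ has closed points of every positive degree, the naive bound is $\mathbb{N}$; the content of the containment is that whenever $\mathcal{D}(C/K)$ is not cofinite, the ``extra'' degrees that are not multiples of $q$ are forced to come from closed points $x_0$ whose specialization interacts with a non-Galois-invariant cluster, contributing a multiple of $|\mathcal{O}(\mathfrak{s})|$.

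Second, I would argue the contrapositive structure built into the theorem: the hypotheses $(i)$ and $(ii)$ together with $\val(F(0)) \not\equiv 0 \pmod q$ are exactly the conditions under which, for \emph{every} Galois-invariant cluster $\mathfrak{s}$, no closed point ``near'' $\mathfrak{s}$ can produce a degree that is coprime-to-$q$ in a new way — roughly, Krasner/Hensel-type arguments (using that the residue field is algebraically closed and roots are tamely ramified) show that if $x_0$ specializes into the disk $X_{\mathfrak{s}}$ cut out by a Galois-invariant cluster, then $\val(F(x_0))$ is controlled modulo $q$ by $c_{\mathfrak{s}}$, $|\mathfrak{s}|$, and the shift $\gamma_{\mathfrak{s}}$ from \Cref{lem: shifting argument} and \Cref{lem: valuation of polynomial}. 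Under conditions $(i)$–$(ii)$ this valuation is never $\equiv 0 \pmod q$, so the fiber over such $x_0$ does not split, and its closed points have degree $q \deg_K(x_0) \in q\mathbb{N}$. Points $x_0$ that do \emph{not} specialize into any Galois-invariant cluster must specialize into (the union of) a non-Galois-invariant cluster orbit, and there $\deg_K(x_0)$ is divisible by $|\mathcal{O}(\mathfrak{s})|$ because Galois permutes the components of that orbit; combined with the branch-point contributions (where $x_0$ is a root of $F$, giving a degree that is a multiple of the size of the corresponding Galois orbit, again a multiple of some $|\mathcal{O}(\mathfrak{s})|$), this yields the stated containment.

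Concretely, the steps in order would be: (1) reduce to the cover $C \to \mathbb{P}^1$ and express $\mathcal{D}(C/K)$ via fiber-splitting of $y^q = F(x)$; (2) stratify closed points of $\mathbb{P}^1_K$ by which cluster disk they specialize into, using that clusters are $\mathrm{Gal}(K^{\mathrm{sep}}/K)$-stable families of disks; (3) for $x_0$ in a Galois-invariant cluster disk, compute $\val(F(x_0)) \bmod q$ using \Cref{lem: valuation of polynomial} and the shifting from \Cref{lem: shifting argument}, and invoke conditions $(i)$–$(ii)$ plus $\val(F(0)) \not\equiv 0$ to conclude the fiber is inert, forcing the contributed degree into $q\mathbb{N}$; (4) for $x_0$ not in any Galois-invariant cluster disk, show $|\mathcal{O}(\mathfrak{s})| \mid \deg_K(x_0)$ for the relevant non-invariant cluster $\mathfrak{s}$; (5) handle ramification points of the cover (roots of $F$ and the point at infinity) separately, checking their degrees also land in the right-hand set; (6) assemble.

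I expect the main obstacle to be step (3): controlling $\val(F(x_0)) \bmod q$ uniformly for all closed points $x_0$ specializing into a Galois-invariant cluster disk, including the delicate boundary case where $x_0$ is close to the special point $\gamma_{\mathfrak{s}}$ inside $X_{\mathfrak{s}}$ — this is precisely why condition $(ii)$ is needed, and the argument must carefully balance the contribution of roots inside $\mathfrak{s}$ (governed by $|\mathfrak{s}|$ and $c_{\mathfrak{s}}$) against roots outside, using tameness to rule out wild valuation jumps and using the algebraically closed residue field to guarantee that $q$-th power residues behave as expected. A secondary subtlety is making sure the decomposition of closed points by cluster disk is exhaustive and that ``not specializing into any Galois-invariant cluster'' really does force a non-invariant cluster orbit to carry the point, which requires knowing the cluster picture of $F(x)$ organizes all of $\mathbb{P}^1$ cleanly (a consequence of tame ramification and algebraically closed residue field).
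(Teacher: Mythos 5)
Your plan matches the paper's strategy in its broad strokes: stratify $\overline{K}$-points by the sets $X_{\mathfrak{s}}$, use \Cref{lem: valuation of polynomial} and \Cref{lem: shifting argument} to control $\val(F(x_0))$ modulo $q$ when $\mathfrak{s}$ is Galois-invariant, and note that when $\mathfrak{s}$ is not Galois-invariant the degree of $x(P)$ is forced to be a multiple of $|\mathcal{O}(\mathfrak{s})|$. This correctly establishes the containment and the direction ``conditions hold $\Rightarrow$ not cofinite.'' Two remarks on execution: the paper handles the branch points (roots of $F$ and the point at infinity) in one stroke via \Cref{lem: degree set is dense}, passing to the affine open $U$, rather than treating them as a separate step (5); and your phrase ``the fiber is inert'' is really the contrapositive of what is used — the paper's \Cref{lem: no points not zero mod q} starts from a point $P$ of degree $r$ coprime to $q$, deduces via \Cref{lem: X^p - a factors but better} that $F(x_0)$ must be a $q$-th power in the totally ramified field $K(x_0)$, and then derives the valuation constraint.

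The genuine gap is the converse direction: ``not cofinite $\Rightarrow$ conditions hold,'' equivalently ``some condition fails for some Galois-invariant $\mathfrak{s}$ $\Rightarrow$ $\mathcal{D}(C/K)$ is cofinite.'' Your proposal asserts the conditions are ``exactly'' those ruling out new degrees, but nothing in steps (1)--(6) produces points of the curve; you only bound degrees from above. To prove cofiniteness when a condition fails, one must explicitly exhibit points of all sufficiently large degree. This is the content of \Cref{lem: congruence conditions imply N>r}: one reduces to $\gamma_{\mathfrak{s}} = 0$, picks a target valuation $\frac{a}{r}$ in the interval $(M,N)$ whose image under $x \mapsto \frac{|\mathfrak{s}|}{q}x + \frac{c_{\mathfrak{s}}}{q}$ lands in $\frac{1}{r}\mathbb{Z}$ (using \Cref{lem: sol in interval}), and then builds an actual $x_0$ of degree exactly $r$ with that valuation via roots of $x^t - \pi^u$ and a perturbation $x^s - \tau^R$, before taking $y_0 = \sqrt[q]{F(x_0)}$. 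Together with \Cref{lem: contains qN} (which supplies $q\mathbb{N}$ to fill the residue classes divisible by $q$), this gives $\mathbb{N}_{\geq r_0} \subseteq \mathcal{D}(C/K)$. Without this construction the equivalence is not proved, only the implication in one direction and the containment.
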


The two main tools used in the proof of \zcref{thm: main thm} are \zcref{lem: valuation of polynomial} and \zcref{lem: no points not zero mod q}.  \zcref{lem: valuation of polynomial} provides a formula to compute $\val(F(x_0))$ as a function of $x_0 \in \overline{K}$ and the nearest cluster, and may be of independent interest.  \zcref{lem: no points not zero mod q} shows that there is an obstruction arising from the valuation to points of arbitrary degree.

The results of \cite{Degrees-on-Varieties} show that \zcref{thm: main thm} must relate to the multiplicities of the components of the special fiber of a regular model with strict normal crossings. Following the construction of \cite{Regular-Models}, for many Galois invariant clusters $\mathfrak{s}$ (specifically, for those that fall in case \ref{case: annulus type} of \zcref{lem: types of S}), there exists a $v$-edge $L_\mathfrak{s}$, and that $\delta_{L_\mathfrak{s}}$ equals 1 if and only if the conditions of \zcref{thm: main thm} are not satisfied. When $\delta_{L_\mathfrak{s}}$ equals 1, there are intersecting copies of $\mathbb{P}^1$ with coprime multiplicity, and thus \cite{Degrees-on-Varieties}*{Theorem 1.1} shows that for some $r$, $\mathbb{N}_{>r} \subseteq \mathcal{D}(C/K)$. We compute the degrees of all points reducing to the $\mathbb{P}^1$'s arising from $L_\mathfrak{s}$ simultaneously in \zcref{lem: congruence conditions imply N>r}. When $C/K$ is not $\Delta_v$-regular, the methods of \cite{Regular-Models} do not give a regular model, see \zcref{rem: dokchitser}. As in \zcref{ex: computed degree set}, we can often compute the degree set even when $C/K$ is not $\Delta_v$-regular.

Restricting briefly to the case of hyperelliptic curves, \cite{models-of-hyperelliptic-curves}*{Theorem 1.2} shows that, if the curve has potential semistable reduction (which is equivalent to every root of $F(x)$ being tamely ramified), the special fiber can be computed from a \textit{cluster diagram} (see \cite{original-cluster-diagrams} or the overview \cite{cluster-diagrams-users-guide}) and thus the degree set depends only on the cluster diagram.  Using an additional quantity $c_{\mathfrak{s}}$, both $|\mathfrak{s}|$ and $\val(F(\gamma_{\mathfrak{s}}))$ can be computed from the cluster diagram (see \zcref{rem: compute v(F(gamma))}). We do not know how $c_\mathfrak{s}$ relates to the cluster diagram.

\begin{thm}\label{thm: nice generate examples theorem}
    Let $K$ be a discretely valued Henselian field with algebraically closed residue field of characteristic $p$. Let $q$ be a prime not equal to $p$, and let $n_1, \ldots, n_\ell$ be positive integers. Then, there exists a superelliptic curve $C/K$ such that
    \begin{align*}
        \mathcal{D}(C/K) = q\mathbb{N} \cup n_1\mathbb{N} \cup \ldots \cup n_\ell \mathbb{N}.
    \end{align*}
\end{thm}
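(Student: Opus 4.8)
The plan is to realize the prescribed set by an explicit superelliptic curve $C\colon y^q = F(x)$, bounding $\mathcal{D}(C/K)$ from above with \Cref{thm: main thm} and from below by exhibiting closed points of each degree in $S := q\mathbb{N}\cup n_1\mathbb{N}\cup\dots\cup n_\ell\mathbb{N}$. First I would dispose of the degenerate cases: if some $n_i = 1$ then $S = \mathbb{N}$ and one may take $C = \mathbb{P}^1$ (say $y^q = x$), which has closed points of every degree since $K$ is Henselian with algebraically closed residue field; and since $n_i\mathbb{N}\subseteq q\mathbb{N}$ when $q\mid n_i$, every such $n_i$ may be deleted. So assume from now on that $n_i\geq 2$ and $\gcd(n_i,q) = 1$ for all $i$ (and $\ell\geq 1$, the case $\ell = 0$ being handled by adjoining a single degree-$q$ Galois-invariant cluster below); then $S$ is not cofinite, so by \Cref{thm: main thm} a curve with tamely ramified roots realizing $S$ must satisfy $\val(F(0))\not\equiv 0\pmod q$ and conditions $(i)$, $(ii)$ at every Galois-invariant cluster, whereupon $\mathcal{D}(C/K)\subseteq q\mathbb{N}\cup\bigcup_{\mathfrak s\text{ not invariant}}|\mathcal{O}(\mathfrak s)|\mathbb{N}$. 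The problem is thus to build $F$ with exactly the right cluster combinatorics and then verify the lower bound.

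For the construction I would take $F(x) = \pi\prod_{i=1}^{\ell}G_i(x)$ with $\pi$ a uniformizer, where $L_i = K(\pi^{1/n_i})$ has degree $n_i$, $c_i$ is a generator of $L_i$ with the $\ell$ centers $c_1,\dots,c_\ell$ placed pairwise at one common large negative distance, and $G_i(x) = \prod_{\sigma}\bigl((x - \sigma(c_i))^q - \pi^{M_i}\bigr)$, the product over the $K$-conjugates $\sigma(c_i)$ of $c_i$, with $M_i\equiv -1\pmod q$ (so coprime to $q$) and large. A Newton-polygon computation shows that the roots of each $G_i$ fall into $n_i$ Galois-conjugate blocks of size $q$, each block a cluster $\mathfrak s_i$ with $|\mathcal{O}(\mathfrak s_i)| = n_i$; that $G_i$ is irreducible over $K$, so all its roots have degree $qn_i$; and that the only Galois-invariant clusters are the top cluster (size $q\sum n_i$) and the root sets of the individual $G_i$ (size $qn_i$). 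Hence every Galois-invariant cluster has size divisible by $q$, every non-invariant cluster has orbit size $n_i$ or $qn_i$, and so $|\mathcal{O}(\mathfrak s)|\mathbb{N}\subseteq S$ always. One computes $\val(F(0))\equiv 1\pmod q$ and, using $M_i\equiv -1\pmod q$, $\val(F(c_i))\in\tfrac{q}{n_i}\mathbb{Z}$; the remaining conditions of \Cref{thm: main thm} ($c_{\mathfrak s}\not\equiv 0\pmod q$ and $(ii)$, for the finitely many invariant clusters) amount to finitely many more nonvanishing congruences mod $q$, which I would arrange by adjusting the common distance between the $c_i$ and the residues of the $M_i$ modulo higher powers of $q$. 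Then \Cref{thm: main thm} gives $\mathcal{D}(C/K)\subseteq S$. For the reverse inclusion: to reach $q\mathbb{N}$, take $x_0$ of any prescribed degree $qm$ over $K$ with $\val(x_0)$ exceeding the valuation of every root, so $\val(F(x_0)) = \val(F(0))\in\mathbb{Z}\subseteq q\cdot\val(K(x_0)^{\times})$, hence $F(x_0)$ is a $q$-th power in $K(x_0)$ and its fibre in $C$ splits into $q$ closed points of degree $qm$; to reach $n_i\mathbb{N}$, take $x_0 = c_i + \varpi$ with $\varpi$ of degree $m$ over $L_i$ and $\val(\varpi)$ large, so $\val(F(x_0)) = \val(F(c_i))\in\tfrac{q}{n_i}\mathbb{Z}\subseteq q\cdot\val(K(x_0)^{\times})$, and again the fibre gives $q$ closed points, now of degree $n_im$.

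The decisive obstacle is the case in which the residue characteristic $p$ is positive and some $n_i$ is divisible by $p$: then a cluster of orbit size $n_i$ necessarily contains roots of degree divisible by $p$ (a root in $\mathfrak s$ has degree divisible by $|\mathcal{O}(\mathfrak s)|$), so since every orbit size occurring in \Cref{thm: main thm} is prime to $p$, no superelliptic curve with tamely ramified roots and non-cofinite degree set can have a point of degree $n_i$; the realizing curve must therefore carry wild ramification, and \Cref{thm: main thm} is unavailable. For this case I would equip $C$ with an explicitly controlled wildly ramified cluster-like structure providing the $p$-power part of each such $n_i$ — the lower-bound arguments above, being purely valuation-theoretic, still go through — and compute $\mathcal{D}(C/K)$ directly from a strict normal crossings model via \cite{Degrees-on-Varieties}*{Theorem 1.1}; producing such a model (the construction of \cite{Regular-Models} is not available once $C$ fails to be $\Delta_v$-regular) and reading off the component multiplicities is where the bulk of the work lies. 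A secondary but still real difficulty, present already in the tame case, is ensuring that the congruence conditions $(i)$, $(ii)$ — especially those involving the auxiliary integer $c_{\mathfrak s}$, whose dependence on the parameters $c_i$ and $M_i$ must be tracked by hand — can be satisfied simultaneously with the choices forced by the lower bound.
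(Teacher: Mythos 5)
Your high-level plan — produce an explicit $F$, bound $\mathcal D(C/K)$ above via the cluster combinatorics, and produce points of each target degree by hand — is the same shape as the paper's argument. But you have correctly identified, and then not closed, the decisive gap: when some $n_i$ is divisible by the residue characteristic $p$, any realizing $F$ must have wildly ramified roots, so \Cref{thm: main thm} is simply unavailable for the upper bound. Your fallback of constructing a strict normal crossings model and reading off multiplicities via \cite{Degrees-on-Varieties}*{Theorem 1.1} is precisely the route \Cref{rem: dokchitser} flags as impractical once $\Delta_v$-regularity fails, and you leave that work undone, so the proposal does not actually prove the statement in the wild case.

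What you are missing is that \Cref{thm: main thm} merely packages the lemmas, and the lemmas themselves are largely insensitive to tameness for a sufficiently simple cluster geometry. The paper takes the ``flat'' choice $F(x)=\pi(x^c-\pi^a)\prod_{n_k>1}\prod_{i=1}^{q}(x^{n_k}-\pi)$ (roots sitting at valuations $a/c$ and $1/n_k$) and applies \Cref{lem: valuation of polynomial} and \Cref{lem: no points not zero mod q} directly, not the packaged theorem: for ball-type clusters the argument is that the denominator of $\val(x(P))$ divides $\deg_K P$, which needs no ramification hypothesis at all; for annulus-type Galois-invariant clusters, only cases (1) and (2) of the proof of \Cref{lem: no points not zero mod q} can occur — neither touches $\gamma_{\mathfrak s}$ or the tame hypothesis — so one only needs $c_{\mathfrak s}\in\mathbb Z$, which is an explicit Newton-polygon computation analogous to \Cref{lem: cS integer}. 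This salvages the upper bound for wildly ramified $F$ without ever producing a regular model. Two secondary points. Your nested construction $G_i(x)=\prod_\sigma\bigl((x-\sigma(c_i))^q-\pi^{M_i}\bigr)$ with displaced centers $c_i$ makes the cluster combinatorics much harder to verify than the paper's single chain of annulus clusters, and the sentence ``arrange by adjusting the common distance $\ldots$ and the residues of the $M_i$'' is where that verification is hiding. You also have not addressed $q=2$: there the congruence $c_{\mathfrak s}\equiv 1+a\pmod 2$ forces a parity constraint (cf.\ \cite{Degrees-on-Varieties}*{Lemma 5.8}), which the paper handles by padding so that $\deg F\equiv 0\pmod 4$.
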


As a special case of \zcref{thm: nice generate examples theorem}, we find that for every pair of odd primes $p$ and $q$, there exists a curve $C/K$ such that $\mathcal{D}(C/K) = p \mathbb{N} \cup q\mathbb{N}$. Thus, the density of $\mathcal{D}(C/K)$ in $\ind(C/K)\mathbb{N}$ can be arbitrarily small.

% \section*{Acknowledgements}

The authors would like to thank Bianca Viray, Paul Fili, and Carlos Rivera. The first author was supported in part by NSF grant DGE-2140004.

% First and foremost, the authors would like to thank their advisor, Bianca Viray, for her invaluable contributions at every step of the process. Without her tireless dedication to mathematical beauty, this paper could not exist. As the mathematics surrounding \zcref{lem: tamely ramified generalized krasners} proved especially difficult, the authors would like to thank Paul Fili for helpful comments and Carlos Rivera for the reference \cite{polydisks}. Thanks goes out as well to Farbod Shokrieh for suggesting the very helpful \cite{Regular-Models} and to Cynthia Vinzant for very early comments on \zcref{lem: valuation of polynomial}. Leopold Mayer played an integral role in helping the authors with the necessary geometry. We are indebted to a third Alexander, Alexander Betts, who helped the authors better understand cluster diagrams and analytic curves. The authors would also like to thank Bryan Boehnke and Mallory Dolorfino for extensive feedback on numerous iterations of this project. Tracy Chin is owed thanks for helpful comments about embedded tropical geometry, while both Dhruv Bhatia and Caelan Ritter deserve credit for their help with material surrounding the special fiber and arithmetic surfaces. Finally, the authors would like to thank the University of Washington graduate student community, without whom they could not have learned the necessary mathematics nor had the mental fortitude to write their $(n+1)$-st draft.

% {\huge Stefan Steinerberger}

\section{Preliminaries} \label{sec: preliminaries}

\subsection{Notation}\label{subsec: notation}

Throughout the paper, let $K$ be a discretely valued Henselian field with value group $\mathbb{Z}$, let $\val$ be the valuation on $\overline{K}$ normalized with respect to $K$ and associated absolute value $|\cdot|$, uniformizer $\pi$, and residue characteristic $p$.  A curve is a smooth, projective, geometrically integral, dimension $1$ scheme over a field. For a prime $q \neq p$, a superelliptic curve $C/K$ of degree $q$ and genus $g$ is the normalization of the projective closure of an affine variety given by $y^q = F(x)$, where $F(x)$ has no $q$-th powers.  Let $F(x) = a_d\prod_{i = 1}(x - \alpha_i)$ be the factorization over $\overline{K}$.

\subsection{General lemmas}

\begin{lem}\label{lem: X^p - a factors but better}
    Let $p$ be a prime, let $k$ be a field, and let $a \in k^{*}$. The polynomial $h(y) = y^p - a$ is either irreducible or $a \in k^{\times p}$.
\end{lem}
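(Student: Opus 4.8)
The plan is to prove the contrapositive in the following form: if $h(y) = y^p - a$ is \emph{reducible} over $k$, then $a \in k^{\times p}$. First I would factor $h(y) = \prod_{i=1}^{p}(y - \alpha_i)$ over $\overline{k}$; since each $\alpha_i$ is a root of $h$, we have $\alpha_i^p = a$ for every $i$. A nontrivial factorization over $k$ yields a monic factor $g \in k[y]$ with $1 \le \deg g = d \le p-1$, and by unique factorization in $\overline{k}[y]$ we may relabel so that $g(y) = \prod_{i=1}^{d}(y - \alpha_i)$.

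The key observation is that the product of these $d$ roots lies in $k$: setting $b \coloneqq \prod_{i=1}^{d}\alpha_i = (-1)^d g(0)$, we have $b \in k$ because $g \in k[y]$, and
\[
    b^p = \prod_{i=1}^{d}\alpha_i^p = a^d ,
\]
so $a^d$ is a $p$-th power in $k$. Now I would use that $p$ is prime and $0 < d < p$, hence $\gcd(d,p) = 1$: choosing integers $u, v$ with $ud + vp = 1$ gives
\[
    a = a^{ud + vp} = (a^d)^u (a^p)^v = (b^p)^u (a^p)^v = (b^u a^v)^p \in k^{\times p},
\]
as desired. Note this argument never invokes roots of unity, so the inseparable case $\operatorname{char}(k) = p$ (where $h(y) = (y - \alpha)^p$) is handled with no change.

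There is no serious obstacle here: the only point needing a moment's care is that a monic factor of $h$ over $k$ has as its constant term a $k$-rational product of roots of $h$, which is immediate from unique factorization in $\overline{k}[y]$. The B\'ezout step is the crux — it is precisely what converts "$a^d \in k^{\times p}$" into "$a \in k^{\times p}$", and is where the primality of $p$ (via $\gcd(d,p)=1$ for all $1 \le d \le p-1$) is used.
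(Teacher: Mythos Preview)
Your proof is correct and is the standard elementary argument for this fact. The paper does not actually prove the lemma but simply cites \cite{pth-power-irreducible}*{Chapter VI, Theorem 9.1} (Lang's \emph{Algebra}), whose proof of the more general irreducibility criterion for $y^n - a$ specializes, in the prime case, to essentially the same computation you wrote: take the constant term of a proper factor to get $b \in k$ with $b^p = a^d$, then use B\'ezout on $d$ and $p$ to extract a $p$-th root of $a$ itself.
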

\begin{proof} 
    This follows from \cite{pth-power-irreducible}*{Chapter VI, Theorem 9.1}.
\end{proof}

\begin{lem}\label{lem: sol in interval}
    For integers $a,b,c$, and $r$, let $\rho\colon \frac{1}{r} \mathbb{Z} \to \frac{1}{rb} \mathbb{Z}$ be defined by 
    \begin{align*}
        \rho(x) = \frac{a}{b}x + \frac{c}{b}.
    \end{align*}
    The image of $\rho$ intersects $\frac{1}{r}\mathbb{Z}$ if and only if $\gcd(a,b)$ divides $rc$. If this condition holds, then for any open interval $I$ of length greater than $\frac{b+1}{r}$, $\
    \text{im}(\rho)$ restricted to $I \cap \frac{1}{r}\mathbb{Z}$ intersects $\frac{1}{r}\mathbb{Z}$.
\end{lem}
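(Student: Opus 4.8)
The plan is to reduce everything to the elementary theory of linear congruences. Assume as is implicit that $r,b$ are positive integers. Write a point of the domain as $x = k/r$ with $k \in \mathbb{Z}$; then
\[
    \rho\!\left(\tfrac{k}{r}\right) = \frac{a}{b}\cdot\frac{k}{r} + \frac{c}{b} = \frac{ak + rc}{rb},
\]
which lies in $\frac{1}{r}\mathbb{Z}$ precisely when $b \mid ak + rc$, i.e.\ when $ak \equiv -rc \pmod{b}$. By the standard solvability criterion for linear congruences, such an integer $k$ exists if and only if $\gcd(a,b) \mid rc$. This proves the first assertion.

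For the second assertion, assume $\gcd(a,b) \mid rc$ and put $d = \gcd(a,b)$. Let $S = \{k \in \mathbb{Z} : ak \equiv -rc \pmod{b}\}$ be the (now nonempty) solution set. If $a \neq 0$, then $S$ is exactly one residue class modulo $b/d$, while if $a = 0$ (so $d = b$ and $b \mid rc$) then $S = \mathbb{Z}$; in either case $S$ is an arithmetic progression whose common difference divides $b$, so consecutive elements of $S$ differ by at most $b$, and in particular $S$ meets every block of $b$ consecutive integers. Now let $I$ be an open interval of length greater than $\frac{b+1}{r}$. Then $\{k \in \mathbb{Z} : \frac{k}{r} \in I\}$ is the set of integers lying in the open real interval $rI$, which has length greater than $b+1$ and hence contains a run of $b+1$ consecutive integers. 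Since $S$ meets any such run, there is $k \in S$ with $\frac{k}{r} \in I$, and then $\rho(\frac{k}{r}) \in \frac{1}{r}\mathbb{Z}$, so $\mathrm{im}(\rho)$ restricted to $I \cap \frac{1}{r}\mathbb{Z}$ meets $\frac{1}{r}\mathbb{Z}$.

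There is no serious obstacle here; the argument is entirely elementary. The two points that need a little care are the combinatorial step that an open real interval of length greater than $b+1$ contains $b+1$ consecutive integers, and the verification that the common difference of $S$ is at most $b$ in all cases, so I would handle the degenerate cases $a = 0$ and $\gcd(a,b) = b$ explicitly so that the counting argument applies uniformly. (One could in fact get away with the weaker hypothesis that $I$ has length greater than $\frac{b}{r}$, but the stated bound is all that is needed.)
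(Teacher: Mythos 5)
Your proof is correct and follows essentially the same route as the paper: both reduce the problem to solvability of the linear congruence $ak \equiv -rc \pmod{b}$ and then argue, for the second assertion, that the scaled interval of numerators is long enough to force a hit. The paper phrases the interval step via pigeonhole (the numerators cover a complete set of residues mod $b$), whereas you observe directly that the solution set of the congruence is an arithmetic progression with common difference dividing $b$ and thus meets any run of $b+1$ consecutive integers; this is a minor stylistic difference, and your extra care with the degenerate case $a = 0$ is a small but genuine improvement in rigor.
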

\begin{proof}
    As $\frac{1}{r}\mathbb{Z}$ and $\frac{1}{rb}\mathbb{Z}$ are isomorphic to $\mathbb{Z}$, we construct a map $\varphi\colon \mathbb{Z} \to \mathbb{Z}$ from $\rho$. Explicitly, let $\theta \colon \mathbb{Z} \to \frac{1}{r}\mathbb{Z}$ be division by $r$, and let $\psi\colon \frac{1}{rb}\mathbb{Z} \to \mathbb{Z}$ be multiplication by $rb$. Define $\varphi$ by $\varphi(x) \coloneqq (\psi \circ \rho \circ \theta)(x) = ax + rc$.  Then, $\text{im} (\rho)$ intersects $\frac{1}{r}\mathbb{Z}$ if and only if $\im (\varphi)$ intersects $b\mathbb{Z}$, as $b\mathbb{Z}$ is the image of $\frac{1}{r}\mathbb{Z}$ under $\psi$. The image of $\varphi$ intersects $b\mathbb{Z}$ precisely when there is a solution to $ax \equiv -rc \pmod{b}$, which is solvable if and only if $\gcd(a,b)$ divides $rc$.  For an interval $I$, if $|I| > \frac{b+1}{r}$, then by the pigeonhole principle, the numerators of $I \cap \frac{1}{r}\mathbb{Z}$ contain a complete set of representatives of congruence classes modulo $b$.
\end{proof}

\subsection{Results about Henselian fields}

The following is a strengthening of Krasner's lemma, provided that one works with an element of $K^t$, where $K^t$ is the tamely ramified closure.

\begin{lem}\label{lem: tamely ramified generalized krasners}
    Let $B$ be an open ball in $\overline{K}$ such that $B \cap K^t$ is nonempty.  There exists $\beta \in B$ such that for all $\alpha \in B$, $K(\beta) \subseteq K(\alpha)$ and $\deg_K(\beta)$ equals the size of the orbit of $B$ under $\text{{\normalfont Gal}}(K^t/K)$.
\end{lem}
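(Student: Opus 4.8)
The plan is to attach to $B$ a canonical smallest subfield and to show it is realized by a point of $B$. Since $K$ is Henselian, the valuation extends uniquely to $K^t$, so $G \coloneqq \mathrm{Gal}(K^t/K)$ acts on $K^t$ by isometries and permutes the open balls of $\overline{K}$, sending $\{x : \val(x-\beta_0) > \rho\}$ to $\{x : \val(x-\sigma\beta_0) > \rho\}$. I would fix $\beta_0 \in B \cap K^t$; it generates a finite tame subextension, so $\mathrm{Stab}_G(\beta_0)$, and hence $H \coloneqq \mathrm{Stab}_G(B) \supseteq \mathrm{Stab}_G(\beta_0)$, is open of finite index. Setting $L \coloneqq (K^t)^H$, we get $[L:K] = [G:H]$, which by orbit--stabilizer is the size of the orbit of $B$. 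I claim that any $\beta \in L \cap B$ has both asserted properties, which reduces the lemma to proving $L \cap B \neq \emptyset$.

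For the claim: if $\alpha \in B$ and $\sigma \in \mathrm{Gal}(K^{\mathrm{sep}}/K)$ fixes $\alpha$, then $\alpha = \sigma\alpha \in \sigma B$, so $\sigma B \cap B \neq \emptyset$; since open balls of equal radius are equal or disjoint, $\sigma B = B$, i.e. $\sigma|_{K^t} \in H$, so $\sigma$ fixes $L$ pointwise. Hence $L \subseteq K(\alpha)$ for every $\alpha \in B$ --- this is the ``strengthening of Krasner'' content. A point $\beta \in L \cap B$ then satisfies $K(\beta) \subseteq L \subseteq K(\alpha)$ for all $\alpha \in B$, and since also $L \subseteq K(\beta)$ we get $K(\beta) = L$, of degree the orbit size.

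To produce a point of $L \cap B$, I would set $M \coloneqq K(\beta_0)$ and note that every $L$-conjugate of $\beta_0$ lies in $B$, since $H = \mathrm{Gal}(K^t/L)$ stabilizes $B$; then descend from $M$ to $L$ in two steps. Let $M^{\mathrm{ur}}$ be the maximal subextension of $M/L$ unramified over $L$, so $M/M^{\mathrm{ur}}$ is totally, tamely ramified and $e \coloneqq [M:M^{\mathrm{ur}}]$ is prime to $p$. Recentering $B$ at $\beta_0$, the conjugates $\sigma_1\beta_0,\dots,\sigma_e\beta_0$ of $\beta_0$ under the embeddings of $M$ over $M^{\mathrm{ur}}$ all lie in $B$, so $\beta_1 \coloneqq e^{-1}\,\mathrm{Tr}_{M/M^{\mathrm{ur}}}(\beta_0) \in M^{\mathrm{ur}}$ satisfies $\val(\beta_1 - \beta_0) \geq \min_i \val(\sigma_i\beta_0 - \beta_0) > \rho$ (using $\val(e)=0$); thus $\beta_1 \in M^{\mathrm{ur}} \cap B$, and the same estimate shows every $L$-conjugate of $\beta_1$ lies in $B$. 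After rescaling $B$ and $\beta_1$ by a power $\varpi_L^j$ of a uniformizer $\varpi_L$ of $L$, I may assume $\beta_1$ is integral and the radius satisfies $\rho \geq 0$, so $B \subseteq \beta_1 + \mathfrak{m}_{\overline K}$; then $\overline{\sigma\beta_1} = \overline{\beta_1}$ for each embedding $\sigma$ of $M^{\mathrm{ur}}$ over $L$, and since $M^{\mathrm{ur}}/L$ is unramified, $\overline{\beta_1}$ is separable over the residue field of $L$ with all conjugates (the reductions of its $L$-conjugates) equal to itself, hence $\overline{\beta_1}$ lies in that residue field. Lifting it to $\mathcal{O}_L$ and iterating the same analysis on the successive ``digits'' $\varpi_L^{-k}(\beta_1 - a)$, each again residually fixed by $\mathrm{Gal}$, should yield after finitely many steps an $a \in L$ with $\val(\beta_1 - a) > \rho$, i.e. $a \in L \cap B$.

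The hard step will be the unramified descent: when the residue field has characteristic $p$, the extension $M^{\mathrm{ur}}/L$ can have degree divisible by $p$, so averaging $\beta_1$ over $\mathrm{Gal}$ need not land in $L$; the residue-digit argument gets around this because an unramified extension cannot move a ball below the scale $\val(\varpi_L)$, which pushes the whole obstruction into the residue extension, where it disappears because the relevant digits are Galois-fixed. The remaining work is bookkeeping --- checking that the rescaling preserves the hypotheses, that the iteration terminates, and that the residue lifts can be chosen in $L$ (passing to the completion of $L$ if convenient) --- and should not require a new idea.
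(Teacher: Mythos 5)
The paper does not prove this lemma itself; it simply cites the main theorem of the reference on polydisks, so there is no in-house argument to compare against. Your proposal supplies a genuine self-contained proof, and I believe it is essentially correct. The structure is sound: take $H=\mathrm{Stab}_G(B)$, $L=(K^t)^H$, observe $[L:K]$ is the orbit size by orbit--stabilizer, and show $L\subseteq K(\alpha)$ for every $\alpha\in B$ via the ``two open balls of equal radius that meet are equal'' argument (using that $\mathrm{Gal}(K^{\mathrm{sep}}/K)$ acts isometrically because $K$ is Henselian). This correctly reduces everything to producing a point of $L\cap B$, which you do in two stages: a trace over the totally tamely ramified part $M/M^{\mathrm{ur}}$ (valid because $e=[M:M^{\mathrm{ur}}]$ is a unit, and the conjugate differences stay in $B$), followed by a digit-by-digit descent across the unramified extension $M^{\mathrm{ur}}/L$ (valid because unramifiedness forces each successive digit's residue to be fixed by all $\ell$-conjugations and hence to lie in $\ell$, and because the value groups agree). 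You are right to flag the unramified part as the crux: the naive trace over $L$ fails when the residue degree is divisible by $p$, and the residue-digit iteration is exactly what circumvents it. The iteration does terminate in finitely many steps since each step raises $\val_L(\beta_1-a_n)$ by at least $1$ and one only needs to exceed the ($L$-normalized) radius. One caution: drop the parenthetical about ``passing to the completion of $L$''. The conclusion demands a point of $L$ itself, not $\widehat L$, and the finite iteration already produces one inside $L$, so completing would weaken rather than streamline the argument.
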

\begin{proof}
    This is the one dimensional case of the main theorem of \cite{polydisks}.
\end{proof}

\begin{lem}\label{lem: tamely ramified extensions}
    Assume that the residue field of $K$ is algebraically closed. If $r$ is coprime to the residue characteristic of $K$, then $K(\sqrt[r]{\pi})$ is the unique extension of $K$ of degree $r$.
\end{lem}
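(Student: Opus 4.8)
The plan is to prove the two halves separately: that $K(\sqrt[r]{\pi})$ is an extension of degree $r$, and that any degree-$r$ extension equals it. For the first half, $y^r - \pi$ is Eisenstein at the maximal ideal of $\mathcal{O}_K$ (since $\val(\pi) = 1$), hence irreducible, so $[K(\sqrt[r]{\pi}):K] = r$; moreover a root has valuation $1/r$, so $K(\sqrt[r]{\pi})/K$ is totally ramified, and tamely so since $r$ is coprime to $p$. Note also that $\mu_r \subseteq K$: the residue field being algebraically closed contains all $r$-th roots of unity, and these lift to $K$ by Hensel's lemma as $r$ is prime to $p$.

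For uniqueness, let $L/K$ have degree $r$. Since $\gcd(r,p) = 1$ (with $p = 0$ allowed), $L/K$ is separable, and since $K$ is Henselian, discretely valued, with perfect (indeed algebraically closed) residue field, it is defectless: passing to the completion $\widehat{K}$, which is complete and discretely valued, one has $[L:K] = [L\widehat{K}:\widehat{K}] = e(L/K)\,f(L/K)$ by the degree formula for complete discretely valued fields. As the residue field of $K$ is algebraically closed, $f(L/K) = 1$, so $e(L/K) = r$ and $L/K$ is totally (tamely) ramified, with value group $\tfrac{1}{r}\mathbb{Z}$. Choose $\pi_L \in L$ with $\val(\pi_L) = 1/r$; then $\pi_L^r = u\pi$ for some unit $u \in \mathcal{O}_L^\times$.

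The crucial step is to replace $\pi_L$ by a genuine $r$-th root of $\pi$ lying in $L$. Because $L$ has algebraically closed residue field and $r$ is prime to $p$, the unit $u$ is an $r$-th power in $\mathcal{O}_L^\times$: the polynomial $T^r - u$ reduces modulo the maximal ideal to $T^r - \bar{u}$, which has a simple root (any $r$-th root $\bar{w}$ of $\bar{u}$, with $r\bar{w}^{r-1} \neq 0$), so Hensel's lemma lifts it to $w \in \mathcal{O}_L^\times$ with $w^r = u$. Then $\varpi := \pi_L w^{-1}$ satisfies $\varpi^r = \pi$, so $K(\varpi) \subseteq L$; since $\mu_r \subseteq K$, we have $K(\varpi) = K(\sqrt[r]{\pi})$, and comparing degrees gives $L = K(\sqrt[r]{\pi})$.

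The conceptual reason behind uniqueness — which could replace the previous two paragraphs — is that $\text{Gal}(K^t/K)$ is procyclic when the residue field is algebraically closed (it is the prime-to-$p$ part of $\widehat{\mathbb{Z}}$, via the tame character), hence has a unique open subgroup of index $r$; since every degree-$r$ extension of $K$ is tame and so contained in $K^t$, there is a unique such extension, realized by $K(\sqrt[r]{\pi})$. The one genuine obstacle is ruling out defect in $L/K$, i.e. knowing $[L:K] = e(L/K)f(L/K)$: this is exactly where Henselianity together with discreteness of the valuation (and perfectness of the residue field, automatic here) is used, and it is the only place where more than elementary manipulation is needed.
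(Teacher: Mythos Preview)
Your proof is correct. The paper does not give an argument at all: it simply cites a standard reference (Chapter II, Proposition 12 of the cited text), so there is no detailed approach to compare against. What you have written is precisely the standard proof one would find in such a reference---Eisenstein for the degree, then for uniqueness the chain ``separable $\Rightarrow$ defectless (Henselian, discrete valuation) $\Rightarrow$ totally tamely ramified (algebraically closed residue field forces $f=1$) $\Rightarrow$ Hensel produces an $r$-th root of $\pi$ inside $L$''---so your argument and the cited result are substantively the same, with yours made explicit. The alternative via the procyclic structure of $\mathrm{Gal}(K^t/K)$ is also a correct and standard viewpoint.
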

\begin{proof}
    This follows from \cite{unique-tamely-ramified-extension}*{Chapter II, Proposition 12}.
\end{proof}

\subsection{Structure of degree sets of general schemes over Henselian fields}\label{sec: degree sets}
We establish two lemmas that are crucial to the computation of degree sets. Both are due to Liu and Lorenzini \cite{new-points}.  We state them here in the form we require for our argument.
\begin{lem}\label{lem: closed under multiple}
    Let $X/K$ be a geometrically integral and smooth scheme of finite type and positive dimension. Let $m \in \mathbb{N}$. If $m \in \mathcal{D}(X/K)$, then $m\mathbb{N} \subseteq \mathcal{D}(X/K)$.
\end{lem}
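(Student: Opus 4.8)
The plan is to produce the required closed points by deforming, in the valuation topology, a geometric point lying over $P$, using that $\overline{K}$ — being algebraically closed — is Henselian, so that an étale morphism of $K$-varieties is a local homeomorphism on $\overline{K}$-points. The whole argument is local near $P$, so I first cut down to a convenient chart; no global model of $X$ is needed.

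Set $d = \dim X \ge 1$. On an affine neighbourhood $U$ of $P$, smoothness gives $f_1,\dots,f_d \in \mathcal{O}_X(U)$ whose differentials span $\Omega^1_{X/K}$ at $P$, so after shrinking $U$ the morphism $\phi = (f_1,\dots,f_d)\colon U \to \mathbb{A}^d_K$ is étale. Fix an embedding $L := \kappa(P)\hookrightarrow\overline{K}$; it determines a geometric point $P'\in X(\overline{K})$ over $P$ with $K(P') = L$, and — assuming first that $L/K$ is separable — the $\mathrm{Gal}(\overline{K}/K)$-orbit of $P'$ has size $m = [L:K]$, with stabiliser $\mathrm{Gal}(\overline{K}/L)$. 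Put $\vec{a}_0 = \phi(P')\in\mathbb{A}^d(L)$. Since $\phi$ is étale it is, near $P'$, a homeomorphism (for the valuation metric) from a ball $B\ni P'$ onto an open neighbourhood of $\vec{a}_0$; this is the Henselian implicit function theorem, and in the tame situations that occur in this paper it can instead be extracted from \Cref{lem: tamely ramified generalized krasners}. Shrinking $B$, take it to be the $\varepsilon$-ball about $P'$ with $\varepsilon$ less than half the least $\val$-distance from $P'$ to its finitely many $\mathrm{Gal}(\overline{K}/K)$-conjugates; this $B$ is $\mathrm{Gal}(\overline{K}/L)$-stable because $\val$ is Galois-invariant and $P'$ is fixed by $\mathrm{Gal}(\overline{K}/L)$.

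Now fix $k\ge 1$. Choose a separable degree-$k$ extension $M_k/L$ (these exist over any Henselian discretely valued field), a generator $\varpi$ of $M_k/L$, and $n$ large enough that $\vec{a} := \vec{a}_0 + (\pi^n\varpi,0,\dots,0)$ lies in $\phi(B)$; since the remaining coordinates of $\vec{a}_0$ lie in $L$ we still have $L(\vec{a}) = M_k$. Let $R := (\phi|_B)^{-1}(\vec{a})\in B\subseteq X(\overline{K})$. For $\sigma\in\mathrm{Gal}(\overline{K}/L)$ the equivariance of $\phi$ and the $\mathrm{Gal}(\overline{K}/L)$-stability of $B$ give $\sigma R = (\phi|_B)^{-1}(\sigma\vec{a})$, so $\sigma$ fixes $R$ iff it fixes $\vec{a}$; that is, $\mathrm{Stab}_{\mathrm{Gal}(\overline{K}/L)}(R) = \mathrm{Gal}(\overline{K}/M_k)$. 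On the other hand, any $\sigma\in\mathrm{Gal}(\overline{K}/K)$ fixing $R$ moves $P'$ to within $2\varepsilon$ of itself (as $\sigma$ is an isometry and $R$ lies within $\varepsilon$ of $P'$), forcing $\sigma P' = P'$ and $\sigma\in\mathrm{Gal}(\overline{K}/L)$. Combining, $\mathrm{Stab}_{\mathrm{Gal}(\overline{K}/K)}(R) = \mathrm{Gal}(\overline{K}/M_k)$, so the closed point $x$ of $X$ corresponding to the orbit of $R$ satisfies $[\kappa(x):K]_{\mathrm{sep}} = [M_k:K] = km$; and since $\phi$ is étale and $M_k/K$ is separable, $\kappa(x)/K$ is separable, so $\deg_K(x) = km$. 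Hence $km\in\mathcal{D}(X)$ for every $k$, i.e. $m\mathbb{N}\subseteq\mathcal{D}(X)$.

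The step I expect to be the crux is exactly this degree bookkeeping: ensuring $x$ has degree \emph{exactly} $km$ rather than a proper divisor. This is what forces the two complementary stabiliser computations — a Krasner-type estimate ``below,'' namely that staying $\val$-close to $P'$ traps the stabiliser of $R$ inside $\mathrm{Gal}(\overline{K}/L)$, and the Galois-equivariance of the étale chart ``above.'' A minor additional point arises when $\operatorname{char}K = p > 0$: if $L/K$ is inseparable, replace $L$ by its separable part $L_s$ over $K$ and run the argument with $k$ scaled by $[L:K]_{\mathrm{insep}}$, landing a closed point of degree $k\,[L_s:K]\,[L:K]_{\mathrm{insep}} = km$; alternatively one may simply cite \cite{new-points}, from which the lemma is taken.
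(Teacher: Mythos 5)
Your proof is correct in substance but takes a genuinely different route from the paper.  The paper's proof is a short black-box argument: cite Pop (\cite{henselian-=>-large}) for the fact that a Henselian field is large, cite Liu--Lorenzini (\cite{new-points}*{Proposition 8.3}) for the fact that over a large field a smooth point acquires new points in every separable extension, and then note that separable extensions of every degree exist.  You instead unpack the Liu--Lorenzini black box and prove the needed instance from scratch: choose an \'etale chart $\phi\colon U \to \mathbb{A}^d_K$, use the Henselian inverse/implicit function theorem to make $\phi$ a local homeomorphism on $\overline{K}$-points near a geometric point $P'$ over $P$, deform $\phi(P')$ by a small amount that generates a prescribed separable extension, and pin down the degree of the new closed point via two complementary stabiliser computations (Galois-equivariance of the chart from above, a Krasner-type proximity bound from below).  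This is more work but has the virtue of being self-contained and of exhibiting explicitly where Henselianity enters; the paper's version has the virtue of brevity and of covering Liu--Lorenzini's full generality (arbitrary large $K$) with no extra effort.

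Two small points.  First, $\val$ is a logarithmic valuation, not a metric, so ``half the least $\val$-distance'' should be ``the least $|\cdot|$-distance''; with the ultrametric inequality the factor $\tfrac12$ is unnecessary in any case.  Second, your one-sentence handling of the case where $\kappa(P)/K$ is inseparable (``replace $L$ by $L_s$ and scale $k$'') does not quite go through as written: $X$ need not have an $L_s$-point, and if $L'=\kappa(\phi(P))$ contains the purely inseparable part of $L$, then a generator of a separable extension of $L$ need not be separable over $L_s$, so the stabiliser computation over $\mathrm{Gal}(\overline{K}/L_s)$ is not as clean as in the separable case.  You acknowledge this by offering the fallback of simply citing \cite{new-points}, which is fine and is what the paper does; but if you want a self-contained argument in positive characteristic, the inseparable case deserves a more careful treatment than the current sentence gives it.
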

\begin{proof}
    Since $K$ is Henselian, $K$ is a large field \cite{henselian-=>-large}*{Theorem 1.1}. If $m \in \mathcal{D}(X/K)$, there exists some extension $L/K$ of degree $m$ such that $X(L)$ is nonempty, and hence contains a smooth point. From \cite{new-points}*{Proposition 8.3}, we find that any separable extension $M/L$ contains infinitely many new points of $X$. We can then pick a separable extension $M_n/L$ of degree $n$ for each $n \in \mathbb{N}$, as for example, if $\pi$ is a uniformizer for $K$, then $X^n + \pi X + \pi$ will be an irreducible and separable polynomial. Thus, we find that $m\mathbb{N} \subseteq \mathcal{D}(X/K)$. 
\end{proof}

\begin{lem}\label{lem: degree set is dense}
    Let $X/K$ be a geometrically integral and smooth scheme of finite type and positive dimension. Let $W$ be a Zariski open subset of $X$. Then, $\mathcal{D}(W/K) = \mathcal{D}(X/K)$.
\end{lem}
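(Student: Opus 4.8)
The plan is to prove the two inclusions separately; the forward inclusion $\mathcal{D}(W/K) \subseteq \mathcal{D}(X/K)$ is essentially formal, while the reverse inclusion uses that $K$ is large together with \Cref{lem: closed under multiple}. First I would dispose of the degenerate case: if $W = \varnothing$ the statement fails, so we take $W$ nonempty; then, $X$ being irreducible, $W$ is a dense open subscheme, hence itself geometrically integral, smooth, of finite type, and of the same positive dimension as $X$, so that \Cref{lem: closed under multiple} applies to $W$ as well. For the forward inclusion, since $W$ and $X$ are of finite type over the field $K$, a point is closed exactly when its residue field is finite over $K$; thus every closed point of $W$ is a closed point of $X$ with the same residue field, and its degree over $K$ is unchanged. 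Hence $\mathcal{D}(W/K) \subseteq \mathcal{D}(X/K)$.

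For the reverse inclusion, fix $m \in \mathcal{D}(X/K)$, witnessed by a closed point $P$ with residue field $L \coloneqq \mathbf{k}(P)$ of degree $m$ over $K$. Then $X(L) \neq \varnothing$, and since $X$ is smooth over $K$, the variety $X_L$ is smooth over $L$ and geometrically integral, with an $L$-point. A finite extension of a Henselian field is Henselian, so $L$ is Henselian and therefore large by \cite{henselian-=>-large}*{Theorem 1.1}; consequently $X_L(L)$ is Zariski dense in $X_L$. As $W_L$ is a nonempty open subscheme of $X_L$, it meets this dense set, so $W(L) = W_L(L) \neq \varnothing$. Such an $L$-point lies over a closed point $Q$ of $W$ whose residue field embeds in $L$ over $K$; writing $m' \coloneqq [\mathbf{k}(Q):K]$, we get $m' \mid m$ and $m' \in \mathcal{D}(W/K)$. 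Applying \Cref{lem: closed under multiple} to $W$ gives $m'\mathbb{N} \subseteq \mathcal{D}(W/K)$, and since $m' \mid m$ we conclude $m \in \mathcal{D}(W/K)$.

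The only non-formal ingredient is the largeness of $K$ and of its finite extensions, which upgrades the existence of a single $L$-point of $X$ to Zariski density of $X_L(L)$ and thereby forces an $L$-point into the open set $W$. I expect the point requiring the most care to be the bookkeeping of field degrees: the closed point $Q$ produced inside $W$ a priori only has degree dividing $m$, which is exactly why \Cref{lem: closed under multiple} is invoked to recover degree precisely $m$, and one must check that $W$ inherits the hypotheses (geometrically integral, smooth, of finite type, positive dimension) needed to apply that lemma.
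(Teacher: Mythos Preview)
Your proof is correct and follows essentially the same approach as the paper's: both directions are handled identically, with the key input being that a finite extension $L$ of the Henselian field $K$ is large, so a single $L$-point on $X$ forces Zariski density and hence an $L$-point on $W$, after which \Cref{lem: closed under multiple} (applied to $W$) recovers degree exactly $m$. Your write-up is somewhat more careful than the paper's in explicitly excluding $W=\varnothing$ and in verifying that $W$ inherits the hypotheses needed for \Cref{lem: closed under multiple}, but the argument is the same.
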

\begin{proof}
    As $W$ embeds into $X$, we have that $\mathcal{D}(W/K) \subseteq \mathcal{D}(X/K)$.  If $m \in \mathcal{D}(X/K)$, then there exists some $L/K$ of degree $m$ such that $X(L)$ is nonempty. Since $K$ is Henselian, $L$ is also Henselian, and hence is a large field \cite{henselian-=>-large}*{Theorem 1.1}, so that $X(L)$ is Zariski dense. Thus, $W(L)$ is nonempty, and hence there exists some point of $W$ having degree dividing $m$. Applying \zcref{lem: closed under multiple}, we find that $m \in \mathcal{D}(W/K)$.
\end{proof}

\section{The valuation of a polynomial}

In this section we address the question of determining, for any $x_0 \in \overline{K}$, the valuation of $F(x_0)$ in terms of the valuation of $x_0$.  We give a complete answer in terms of the roots $\{\alpha_i\}$ of the polynomial $F(x)$.  The following definition is \cite{cluster-diagrams}*{Definition 1.1}.

\begin{definition}\label{defn: cluster}
    A \textbf{cluster} is a nonempty subset $\mathfrak{s} \subseteq \{\alpha_i\}$ of the form $\mathfrak{s} = D \cap \{\alpha_i\}$ for some disk $D_{z,\, d} = \{x \in \overline{K}\mid |x - z|\leq d\}$ for some $z \in \overline{K}$ and $d \in \mathbb{R}$.  Let $\mathcal{O}(\mathfrak{s})$ denote the orbit of $\mathfrak{s}$ as a set under $\text{Gal}(K^{\text{\rm sep}}/K)$, and let $\mathfrak{s}^c$ denote the complement of $\mathfrak{s}$ in $\{\alpha_i\}$.
\end{definition}

\begin{lem}\label{lem: S is a cluster}
    Let $x_0 \in \overline{K}$. Let $R_{x_0} = \min_{i} |x_0 - \alpha_i|$ and let $S$ be the set of roots that achieve the minimum. Then, $S$ is a cluster.
\end{lem}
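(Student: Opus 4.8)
The plan is to simply exhibit an explicit disk cutting out $S$, namely the closed disk $D_{x_0,\, R_{x_0}}$ centered at $x_0$ of radius $R_{x_0}$. First I would note that $S$ is nonempty: $F(x)$ has only finitely many roots, so the minimum defining $R_{x_0}$ is attained, and therefore at least one $\alpha_i$ lies in $S$.

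The heart of the argument is the observation that for a root $\alpha_i$ one has $|x_0 - \alpha_i| \leq R_{x_0}$ if and only if $\alpha_i \in S$. Indeed, by the very definition of $R_{x_0} = \min_i |x_0 - \alpha_i|$, every root satisfies $|x_0 - \alpha_i| \geq R_{x_0}$; hence the inequality $|x_0 - \alpha_i| \leq R_{x_0}$ can hold only with equality $|x_0 - \alpha_i| = R_{x_0}$, which is precisely the condition that $\alpha_i$ achieves the minimum, i.e. $\alpha_i \in S$. Consequently $D_{x_0,\, R_{x_0}} \cap \{\alpha_i\} = S$. Since $x_0 \in \overline{K}$ and $R_{x_0} \in \mathbb{R}_{\geq 0}$, this disk has exactly the shape demanded by \Cref{defn: cluster}, so $S$ is a cluster.

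I do not expect any genuine obstacle here; the only points worth keeping in mind are that the minimum is really attained (finiteness of the root set) and that \Cref{defn: cluster} allows the degenerate radius $R_{x_0} = 0$, which occurs exactly when $x_0$ is itself one of the $\alpha_i$ — in that case $D_{x_0,\, 0} = \{x_0\}$ and $S = \{x_0\}$, so the conclusion still holds. One could add that $D_{x_0,\, R_{x_0}}$ is in fact the smallest disk centered at $x_0$ meeting $\{\alpha_i\}$, but minimality plays no role in the statement as phrased, so I would omit it.
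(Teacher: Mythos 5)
Your argument is exactly the paper's: exhibit $D_{x_0,\,R_{x_0}}$ and observe that its intersection with $\{\alpha_i\}$ is $S$. You simply spell out the (immediate) verification that the paper leaves implicit.
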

\begin{proof}
    Choose $z = x_0$ and $d = R_{x_0}$.  We then have that $S = D_{z,d} \cap \{\alpha_i\}$.
\end{proof}

\begin{definition}
    Let $X_\mathfrak{s} \subseteq \overline{K}$ be the set
    \begin{align*}
        X_{\mathfrak{s}} = \{x \in \overline{K} \mid \forall \alpha \in \mathfrak{s}, |x - \alpha| = R_{x}, \forall \alpha' \not\in \mathfrak{s}, |x - \alpha'| > R_x \}.
    \end{align*}
\end{definition}

We think of $X_\mathfrak{s}$ as partitioning $\overline{K}$ by the roots of minimal distance.  By construction, the collection of $\{X_\mathfrak{s}\}$ covers $\overline{K}$ as $\mathfrak{s}$ varies over all clusters.
\begin{lem}\label{lem: valuation of polynomial}
    Fix a cluster $\mathfrak{s}$. There exists $c_{\mathfrak{s}} \in \mathbb{Q}$ such that for all $x_0 \in X_{\mathfrak{s}}$ and any $\alpha \in {\mathfrak{s}}$,
    $$
    \val(F(x_0)) = |\mathfrak{s}|\cdot\val(x_0 - \alpha) + c_{\mathfrak{s}},
    $$
    where $c_{\mathfrak{s}}$ is given by
    \begin{align*}
        c_{\mathfrak{s}} &= \val(a_d) + \sum_{\beta \in {\mathfrak{s}^c}} \val(\alpha - \beta)
    \end{align*}
    and all expressions above are independent of the choice of $\alpha \in {\mathfrak{s}}$.
\end{lem}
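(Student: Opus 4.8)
The plan is to factor $F$ over $\overline{K}$ and take valuations termwise, then apply the ultrametric (isosceles triangle) principle to rewrite the contributions of the roots outside $\mathfrak{s}$. Write $F(x_0) = a_d \prod_{\alpha \in \mathfrak{s}}(x_0 - \alpha) \prod_{\beta \in \mathfrak{s}^c}(x_0 - \beta)$, so that
\begin{align*}
    \val(F(x_0)) = \val(a_d) + \sum_{\alpha \in \mathfrak{s}} \val(x_0 - \alpha) + \sum_{\beta \in \mathfrak{s}^c} \val(x_0 - \beta).
\end{align*}
For $x_0 \in X_{\mathfrak{s}}$, the definition of $X_{\mathfrak{s}}$ says $|x_0 - \alpha| = R_{x_0}$ for every $\alpha \in \mathfrak{s}$; in particular $\val(x_0 - \alpha)$ does not depend on the choice of $\alpha \in \mathfrak{s}$, and the first sum equals $|\mathfrak{s}|\cdot\val(x_0 - \alpha)$.

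Next I would handle the roots in $\mathfrak{s}^c$. For $\beta \in \mathfrak{s}^c$ and any $\alpha \in \mathfrak{s}$, the definition of $X_{\mathfrak{s}}$ gives $|x_0 - \beta| > R_{x_0} = |x_0 - \alpha|$, so by the isosceles triangle principle applied to $\alpha - \beta = (\alpha - x_0) + (x_0 - \beta)$ we get $|\alpha - \beta| = |x_0 - \beta|$, i.e.\ $\val(x_0 - \beta) = \val(\alpha - \beta)$. Summing over $\beta \in \mathfrak{s}^c$ and substituting into the displayed equation yields
\begin{align*}
    \val(F(x_0)) = |\mathfrak{s}|\cdot\val(x_0 - \alpha) + \val(a_d) + \sum_{\beta \in \mathfrak{s}^c} \val(\alpha - \beta),
\end{align*}
which is the claimed formula with $c_{\mathfrak{s}} = \val(a_d) + \sum_{\beta \in \mathfrak{s}^c} \val(\alpha - \beta)$. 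Finally, since $\val$ takes values in the divisible hull $\mathbb{Q}$ of the value group $\mathbb{Z}$ of $K$, every term $\val(\alpha - \beta)$ lies in $\mathbb{Q}$ and $\val(a_d) \in \mathbb{Z}$, so $c_{\mathfrak{s}} \in \mathbb{Q}$.

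It remains to check that $c_{\mathfrak{s}}$ is genuinely independent of the choice of $\alpha \in \mathfrak{s}$ (note this must be argued intrinsically, since $X_{\mathfrak{s}}$ could a priori be empty and so cannot be used to transfer the independence). Here I would use a disk $D_{z,d}$ witnessing that $\mathfrak{s}$ is a cluster, so $\mathfrak{s} = D_{z,d} \cap \{\alpha_i\}$. Then any two $\alpha, \alpha' \in \mathfrak{s}$ satisfy $|\alpha - \alpha'| \leq \max(|\alpha - z|, |\alpha' - z|) \leq d$, whereas any $\beta \in \mathfrak{s}^c$ satisfies $|\beta - z| > d$; applying the isosceles triangle principle to $\beta - \alpha = (\beta - \alpha') + (\alpha' - \alpha)$ gives $|\beta - \alpha| = |\beta - \alpha'|$, hence $\val(\alpha - \beta) = \val(\alpha' - \beta)$ for every $\beta \in \mathfrak{s}^c$. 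Thus $\sum_{\beta \in \mathfrak{s}^c}\val(\alpha - \beta)$, and therefore $c_{\mathfrak{s}}$, is independent of $\alpha \in \mathfrak{s}$.

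There is no substantive obstacle in this argument; it is entirely a bookkeeping exercise with the ultrametric inequality. The only point requiring care is the repeated use of the strict-inequality case of the isosceles triangle principle (to pass from $\val(x_0 - \beta)$ to $\val(\alpha - \beta)$ and to establish the $\alpha$-independence), together with the observation that the independence must be deduced from a cluster-defining disk rather than from $X_{\mathfrak{s}}$ itself.
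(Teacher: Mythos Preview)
Your argument is correct and follows essentially the same route as the paper: factor $F$, take valuations, and use the ultrametric inequality to replace $\val(x_0-\beta)$ by $\val(\alpha-\beta)$ for $\beta\in\mathfrak{s}^c$. Your extra paragraph giving an intrinsic proof of the $\alpha$-independence of $c_{\mathfrak{s}}$ via a defining disk is more careful than the paper (which simply remarks that the choice of $\alpha$ was arbitrary after deriving the formula); in fact $X_{\mathfrak{s}}$ is never empty for a cluster, so the paper's shortcut is harmless, but your version stands on its own without that observation.
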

Note that for any choice of $\alpha \in {\mathfrak{s}}$, the expression $|\mathfrak{s}|\val(x_0 - \alpha)$ is equivalent to $\sum_{\alpha' \in {\mathfrak{s}}} \val(x_0 - \alpha')$, since all roots in ${\mathfrak{s}}$ are equidistant from $x_0$.  This definition is more symmetric in ${\mathfrak{s}}$ but less useful computationally.

\begin{remark}\label{rem: slope formula}
     By \zcref{lem: valuation of polynomial}, $|\mathfrak{s}|$ and $c_\mathfrak{s}$ are the coefficients of the piecewise affine linear function given by the slope formula, see \cite{baker-slope-formula}*{Theorem 5.15}.
\end{remark}

\begin{proof}[Proof of \zcref{lem: valuation of polynomial}]
    For $\beta \in \mathfrak{s}^c$, 
    $$ R_{x_0} < |x_0-\beta| = |x_0 - \alpha + \alpha - \beta | \leq \max\{ |x_0 - \alpha| , | \alpha - \beta | \} = \max\{R_{x_0},| \alpha - \beta |\}. $$
    We must have that $|\alpha - \beta| > R_{x_0}$ and thus by strong triangle inequality, $|x_0 -\beta| = |\alpha - \beta|$ and we have the same equality on valuations.  Thus, we have
    \begin{align*}
        \val(F(x_0)) &= \val(a_d) + \sum_{\alpha' \in \mathfrak{s}} \val(x_0 - \alpha') +  \sum_{\beta \in \mathfrak{s}^c} \val(x_0 - \beta) = \val(a_d) + \sum_{\alpha' \in \mathfrak{s}} \val(x_0 - \alpha) +  \sum_{\beta \in \mathfrak{s}^c} \val(\alpha - \beta) \\
        &= |\mathfrak{s}| \cdot \val(x_0 - \alpha) + \left(\val(a_d) + \sum_{\beta \in \mathfrak{s}^c} \val(\alpha - \beta)\right)
    \end{align*}
    which gives the desired formula for $x_0 \in X_{\mathfrak{s}}$ (note that the choice of $\alpha$ was arbitrary).
\end{proof}

\section{Properties of clusters}

\begin{lem}\label{lem: types of S}
    Let $\mathfrak{s}$ be a cluster. Then, at least one of the following is true:
    \begin{enumerate}[label=$(\alph*)$]
        \item\label{case: annulus type} there exists some root $\alpha_{\min}$ of $\mathfrak{s}$ such that $\mathfrak{s} = \{ \alpha \mid \val(\alpha) \geq \val(\alpha_{\min})\}$, or
        \item\label{case: ball type} all roots of $\mathfrak{s}$ are contained in $B_{|\alpha|}(\alpha)$ for any $\alpha \in \mathfrak{s}$, and further $X_{\mathfrak{s}} \subseteq B_{|\alpha|}(\alpha)$.
    \end{enumerate}
\end{lem}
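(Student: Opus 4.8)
The plan is to prove this purely from the ultrametric geometry of $\overline{K}$: the only inputs are the strong triangle inequality and the fact that every point of an ultrametric ball is a center of it. In particular, neither hypothesis on the residue field nor any ramification assumption enters — this is a statement about the metric structure of $\overline{K}$ alone.

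First I would record two ``recentering'' facts. Write $d_{\mathfrak{s}} = \max_{\alpha,\alpha' \in \mathfrak{s}}|\alpha - \alpha'|$ for the diameter of $\mathfrak{s}$, with the convention $d_{\mathfrak{s}} = 0$ when $|\mathfrak{s}| = 1$. Fact (1): for every $\alpha \in \mathfrak{s}$ we have $\mathfrak{s} = D_{\alpha,\, d_{\mathfrak{s}}} \cap \{\alpha_i\}$. Indeed, writing $\mathfrak{s} = D_{z,d} \cap \{\alpha_i\}$ as in \Cref{defn: cluster}, one gets $d_{\mathfrak{s}} \le d$, and for a root $\beta \notin \mathfrak{s}$ the inequality $|z - \beta| > d \ge |z - \alpha|$ forces $|\alpha - \beta| = |z - \beta| > d \ge d_{\mathfrak{s}}$, so $\beta \notin D_{\alpha, d_{\mathfrak{s}}}$. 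Fact (2): for every $x \in X_{\mathfrak{s}}$ we have $\mathfrak{s} = D_{x,\, R_x} \cap \{\alpha_i\}$, which is just a restatement of the definition of $X_{\mathfrak{s}}$ (the roots at distance $\le R_x$ from $x$ are exactly those at distance $R_x$, namely $\mathfrak{s}$).

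Next I would run a dichotomy: assuming case \ref{case: ball type} fails, I deduce case \ref{case: annulus type}. Note that if $|\alpha| > d_{\mathfrak{s}}$ for all $\alpha \in \mathfrak{s}$, then since $|\alpha - \alpha'| \le d_{\mathfrak{s}} < |\alpha| = |\alpha'|$ the open balls $B_{|\alpha|}(\alpha)$ for $\alpha \in \mathfrak{s}$ all coincide and contain $\mathfrak{s}$; so in that regime the only way \ref{case: ball type} can fail is $X_{\mathfrak{s}} \not\subseteq B_{|\alpha|}(\alpha)$. Thus failure of \ref{case: ball type} means either (b1) there is $\alpha_0 \in \mathfrak{s}$ with $|\alpha_0| \le d_{\mathfrak{s}}$, or (b2) every $\alpha \in \mathfrak{s}$ has $|\alpha| > d_{\mathfrak{s}}$ but some $x \in X_{\mathfrak{s}}$ satisfies $R_x = |x - \alpha| \ge |\alpha|$ for $\alpha \in \mathfrak{s}$. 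In case (b1), $|0 - \alpha_0| = |\alpha_0| \le d_{\mathfrak{s}}$ puts $0$ in $D_{\alpha_0, d_{\mathfrak{s}}}$, so by Fact (1), $\mathfrak{s} = D_{0, d_{\mathfrak{s}}} \cap \{\alpha_i\} = \{\alpha_i : |\alpha_i| \le d_{\mathfrak{s}}\}$. In case (b2), $|x - \alpha| \ge |\alpha|$ gives $|x| \le \max(|x - \alpha|, |\alpha|) = R_x$, so $0 \in D_{x, R_x}$, and by Fact (2), $\mathfrak{s} = D_{0, R_x} \cap \{\alpha_i\} = \{\alpha_i : |\alpha_i| \le R_x\}$. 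In both cases $\mathfrak{s} = \{\alpha_i : |\alpha_i| \le r\}$ for some $r \in \mathbb{R}$; taking $\alpha_{\min} \in \mathfrak{s}$ of maximal absolute value, the set $\{\alpha_i : |\alpha_i| \le |\alpha_{\min}|\}$ contains $\mathfrak{s}$ and is contained in $\{\alpha_i : |\alpha_i| \le r\} = \mathfrak{s}$, hence equals $\mathfrak{s}$, which is exactly $\{\alpha_i : \val(\alpha_i) \ge \val(\alpha_{\min})\}$, i.e.\ case \ref{case: annulus type}.

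I do not anticipate a genuine obstacle; the only delicate points are bookkeeping (keeping track of which element of $\mathfrak{s}$ is in play at each step, and checking the balls $B_{|\alpha|}(\alpha)$ coincide before invoking ``for any $\alpha \in \mathfrak{s}$'') and two degenerate cases: $|\mathfrak{s}| = 1$ is absorbed by the convention $d_{\mathfrak{s}} = 0$, and if $0$ is itself a root then case \ref{case: annulus type} is read with $\val(0) = \infty$, so $\mathfrak{s} = \{0\}$. The main thing to get right is organizing the case split so that the failure of case \ref{case: ball type} cleanly forces the origin into the relevant disk.
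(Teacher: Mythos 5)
Your proof is correct, and it takes a genuinely different route from the paper's. The paper argues in the direction \emph{not}-\ref{case: annulus type} $\Rightarrow$ \ref{case: ball type}: it fixes a witness $\beta \in \mathfrak{s}^c$ to the failure of \ref{case: annulus type} with $|\beta|\le|\alpha_{\min}|$, takes an arbitrary $x\in X_\mathfrak{s}$, and by a short chain of ultrametric estimates shows $|x|=|\alpha_{\min}|$, then $x\in B_{|\alpha_{\min}|}(\alpha_{\min})$, and then that every $\alpha\in\mathfrak{s}$ lies in that ball too. You go the other way, \emph{not}-\ref{case: ball type} $\Rightarrow$ \ref{case: annulus type}, and organize everything around the two recentering facts (a cluster equals $D_{\alpha, d_\mathfrak{s}}\cap\{\alpha_i\}$ for any $\alpha\in\mathfrak{s}$, and equals $D_{x, R_x}\cap\{\alpha_i\}$ for any $x\in X_\mathfrak{s}$), then split the failure of \ref{case: ball type} into two sub-cases, in both of which $0$ lands inside the relevant closed disk; recentering at $0$ turns $\mathfrak{s}$ into a valuation sublevel set, which is \ref{case: annulus type}. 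The paper's proof is shorter and more surgical with the triangle inequality; yours is slightly longer (extra case split) but makes the underlying principle — ``\ref{case: annulus type} holds precisely when $0$ is as close to $\mathfrak{s}$ as the cluster is to itself'' — visible, and the two recentering facts are clean standalone observations. Your remark that the statement needs no hypotheses on the residue field or on ramification is correct and consistent with how the lemma is stated and used in the paper. Both arguments silently use that $X_\mathfrak{s}\ne\emptyset$ (yours only in sub-case (b2), the paper's in choosing $x\in X_\mathfrak{s}$); this is true because one can always find $x$ with $d_\mathfrak{s} < |x-\alpha| < \min_{\beta\in\mathfrak{s}^c}|\alpha-\beta|$ for $\alpha\in\mathfrak{s}$, but neither proof spells it out.
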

\begin{proof}
    Let $\alpha_{\text{min}}$ denote a root of $\mathfrak{s}$ of minimal valuation. Suppose that \ref{case: annulus type} does not hold, that is, there exists some $\beta \in \mathfrak{s}^c$ such that $|\beta| \leq |\alpha_\text{min}|$. Let $x \in X_{\mathfrak{s}}$. 
    As $\beta \in \mathfrak{s}^c$, by definition of $X_{\mathfrak{s}}$, $|x - \beta| > |x - \alpha_\text{min}|$. If $|x| \neq |\alpha_\text{min}|$, then $|x - \alpha_\text{min}| = \max\{|x|, |\alpha_\text{min}| \} \geq \max \{ |x|, |\beta| \} \geq |x - \beta|$, a contradiction, and thus $|x| = |\alpha_\text{min}|$.
    As $|\beta| \leq |\alpha_\text{min}| = |x|$, the strong triangle inequality gives that $|x - \beta|  \leq |x|$, and hence $|x -\alpha_\text{min}| < |x - \beta| \leq |x| = |\alpha_\text{min}|$, so that $x \in B_{|\alpha_{\min}|}(\alpha_{\min})$. For all $\alpha \in \mathfrak{s}$, by the definition of $X_{\mathfrak{s}}$, $|x - \alpha| = |x - \alpha_\text{min}| < |\alpha_\text{min}|$. Hence,
    \begin{align*}
        |\alpha_\text{min} - \alpha | = |\alpha_\text{min} - x + x - \alpha| \leq \max \{|x - \alpha_\text{min}|, |x - \alpha|\} < |\alpha_\text{min}|.
    \end{align*}
    As all elements of $B_{|\alpha_\text{min}|}(\alpha_\text{min})$ have the same absolute value and any element of the ball can be chosen as the center, the ball $B_{|\alpha|}(\alpha)$ does not depend on choice of $\alpha \in \mathfrak{s}$.
\end{proof}

\begin{lem}\label{lem: shifting argument}
    Suppose that the residue field of $K$ is algebraically closed and that the roots of $F(x)$ are tamely ramified. Let $\mathfrak{s}$ be a Galois-invariant cluster. There exists $\gamma_{\mathfrak{s}} \in K$ and $\alpha_{\min} \in {\mathfrak{s}}$ such that ${\mathfrak{s}} = \{ \alpha \mid \val(\alpha - \gamma_{\mathfrak{s}}) \geq \val(\alpha_{\min} - \gamma_{\mathfrak{s}}) \}$.
\end{lem}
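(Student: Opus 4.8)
The plan is to produce the center $\gamma_{\mathfrak{s}}$ as a $K$-rational point of a suitable Galois-stable ball that cuts out $\mathfrak{s}$ from the root set $\{\alpha_i\}$, and then simply to recenter. The key input is \Cref{lem: tamely ramified generalized krasners}: a Galois-stable open ball meeting $K^t$ contains a $K$-point.

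First I would record that a cluster is cut out by an open ball centered at \emph{any} of its points. Fix $\alpha \in \mathfrak{s}$ and let $d_{\mathfrak{s}} := \max_{\alpha',\alpha'' \in \mathfrak{s}}|\alpha' - \alpha''|$ be its diameter. Writing $\mathfrak{s} = D_{z,d}\cap\{\alpha_i\}$ as in \Cref{defn: cluster}, one has $d \geq d_{\mathfrak{s}}$, and for each $\beta \in \mathfrak{s}^c$ the inequality $|z-\beta| > d$ together with the strong triangle inequality forces $|\alpha - \beta| > d \geq d_{\mathfrak{s}}$. Hence $m := \min_{\beta \in \mathfrak{s}^c}|\alpha - \beta|$ (set $m = +\infty$ if $\mathfrak{s}^c = \emptyset$) satisfies $m > d_{\mathfrak{s}}$, and for any real $d'$ with $d_{\mathfrak{s}} < d' \leq m$ the open ball $B := \{x \in \overline{K} : |x-\alpha| < d'\}$ contains all of $\mathfrak{s}$ and none of $\mathfrak{s}^c$, so $B \cap \{\alpha_i\} = \mathfrak{s}$. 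Since two open balls of equal radius sharing a point coincide, $B$ does not depend on which point of $\mathfrak{s}$ serves as its center.

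Next I would use Galois-invariance of $\mathfrak{s}$ to make $B$ Galois-stable. As $\val$ is the unique extension to $\overline{K}$ of the valuation on $K$, every $\sigma \in \mathrm{Gal}(K^{\mathrm{sep}}/K)$ preserves $\val$, so $\sigma(B)$ is the open ball of radius $d'$ about $\sigma(\alpha)$; since $\sigma(\alpha) \in \mathfrak{s} \subseteq B$, we get $\sigma(B) = B$. Because the roots of $F$ are tamely ramified we have $\alpha \in K^t$, so $B \cap K^t \neq \emptyset$; moreover $\mathrm{Gal}(K^{\mathrm{sep}}/K^t)$ fixes $\alpha$ and hence fixes $B$, so the orbit of $B$ under $\mathrm{Gal}(K^t/K)$ is a single point. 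Applying \Cref{lem: tamely ramified generalized krasners} to $B$ gives a point whose degree over $K$ equals this orbit size, namely $1$; call it $\gamma_{\mathfrak{s}} \in K$.

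Finally I would recenter at $\gamma_{\mathfrak{s}}$. Since $\gamma_{\mathfrak{s}} \in B$, also $B = \{x : |x - \gamma_{\mathfrak{s}}| < d'\}$, so $\mathfrak{s} = \{\alpha_i : |\alpha_i - \gamma_{\mathfrak{s}}| < d'\}$. Let $\alpha_{\min} \in \mathfrak{s}$ attain $\rho := \max_{\alpha'\in\mathfrak{s}}|\alpha' - \gamma_{\mathfrak{s}}|$, equivalently the minimum of $\val(\alpha' - \gamma_{\mathfrak{s}})$ over $\alpha' \in \mathfrak{s}$. Then $\rho < d'$, while every $\beta \in \mathfrak{s}^c$ has $|\beta - \gamma_{\mathfrak{s}}| \geq d' > \rho$; hence $\{\alpha_i : |\alpha_i - \gamma_{\mathfrak{s}}| \leq \rho\}$ contains $\mathfrak{s}$ and is disjoint from $\mathfrak{s}^c$, so it equals $\mathfrak{s}$. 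Rewriting in terms of valuations gives $\mathfrak{s} = \{\alpha \mid \val(\alpha - \gamma_{\mathfrak{s}}) \geq \val(\alpha_{\min} - \gamma_{\mathfrak{s}})\}$. I expect the only genuine obstacle to be the middle step: checking that the ball can be chosen open, Galois-stable, and still exactly cutting out $\mathfrak{s}$ (which hinges on the strict inequality $m > d_{\mathfrak{s}}$), and that the orbit bookkeeping in \Cref{lem: tamely ramified generalized krasners} really forces $\gamma_{\mathfrak{s}} \in K$ rather than just into $K^t$; everything else is routine ultrametric manipulation.
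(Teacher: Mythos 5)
Your proof is correct, and it takes a genuinely different route from the paper's. The paper's proof is explicit: it fixes a uniformizer $\tau$ of the splitting field with $\tau^r = \pi$, expands a root $\alpha \in \mathfrak{s}$ in powers of $\tau$, truncates this expansion at the first index where $\alpha$ and its farthest sibling $\alpha'$ in $\mathfrak{s}$ disagree, and then verifies by a contradiction argument (using that the residue field is algebraically closed, so Galois acts only on the powers of $\tau$) that the truncation lies in $K$. You instead build a Galois-stable open ball $B$ that cuts out $\mathfrak{s}$ from the root set, and invoke \Cref{lem: tamely ramified generalized krasners} to extract a $K$-rational point of $B$; the recentering step is then routine ultrametric geometry. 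Your approach is shorter and cleanly factors the argument as ``the ball is Galois-stable'' plus ``a Galois-stable ball meeting $K^t$ has a $K$-point,'' at the cost of relying on the imported strengthened Krasner lemma where the paper re-derives the needed input inline. One small caveat worth flagging: the paper's explicit $\gamma_\mathfrak{s}$ automatically satisfies $|\alpha - \gamma_\mathfrak{s}| = |\alpha - \alpha'|$, a fact that is reused downstream (in \Cref{rem: compute v(F(gamma))} and in case~\ref{case: close to root} of the proof of \Cref{lem: no points not zero mod q}); your $\gamma_\mathfrak{s}$ is only guaranteed to lie somewhere in $B$, so to serve the same role later you should pick $d'$ to be the smallest value in $|K^{t\times}|$ exceeding the diameter of $\mathfrak{s}$, which forces $|\gamma_\mathfrak{s} - \alpha| \leq d_\mathfrak{s}$ and hence $\val(\alpha_{\min} - \gamma_\mathfrak{s}) = d_\mathfrak{s}$. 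For the statement at hand, though, your argument is complete and correct.
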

\begin{proof}
    If $|{\mathfrak{s}}| = 1$, then we can take $\gamma_{\mathfrak{s}} = \alpha$ for the unique $\alpha \in {\mathfrak{s}}$, which is an element of $K$ as ${\mathfrak{s}}$ is fixed by the Galois action. Now suppose $|{\mathfrak{s}}| > 1$, and fix some $\alpha \in {\mathfrak{s}}$. Define
    \begin{align*}
        \epsilon_{\mathfrak{s}} := \max_{\alpha' \in {\mathfrak{s}}} \{ |\alpha - \alpha'| \} \qquad \delta_{\mathfrak{s}} := \min_{\beta \in \mathfrak{s}^c} \{|\alpha - \beta|\}.
    \end{align*}
    For the remainder of the proof, let $\alpha' \in {\mathfrak{s}}$ be such that $|\alpha - \alpha'| = \epsilon_{\mathfrak{s}}$, and let $\beta \in \mathfrak{s}^c$ be such that $|\alpha - \beta| = \delta_{\mathfrak{s}}$. Let $x \in X_{\mathfrak{s}}$, so that by the definition of $X_{\mathfrak{s}}$,
    \begin{align*}
        \delta_{\mathfrak{s}} &= |\alpha - \beta| = |\alpha - x + x - \beta| \leq \max \{|\alpha - x|, |x - \beta| \} = |x - \beta|, \\
        \epsilon_{\mathfrak{s}} &= |\alpha - \alpha'| = |\alpha -x + x - \alpha'| \leq \max \{R_x, R_x \} < |x - \beta| = \delta_{\mathfrak{s}}.
    \end{align*}
    Now, let $L/K$ be the splitting field of $F(x)$. As all roots of $F(x)$ are tamely ramified, by \zcref{lem: tamely ramified extensions}, there exists $r \in \mathbb{N}$ and a uniformizer $\tau$ for $L$ such that $\tau^r = \pi$ and $L = K(\tau)$. Expanding $\alpha$, $\alpha'$ and $\beta$ in $\tau$, we have that
    \begin{align*}
        \alpha = \sum_i b_i \tau^i \qquad \alpha' = \sum_i b'_i \tau^i \qquad \beta = \sum_i c_i \tau^i.
    \end{align*}
    Let $N$ be the smallest integer such that $b_N \neq c_N$, so that $\delta_{\mathfrak{s}} = |\alpha - \beta| = |\tau^N|$, and let $N'$ be the smallest integer such that $b_{N'} \neq b'_{N'}$, so that $\epsilon_{\mathfrak{s}} = |\alpha - \alpha'| = |\tau^{N'}|$. We have that $|\tau^{N'}| < |\tau^N|$, so that $N < N'$. Let $\gamma_{\mathfrak{s}} \coloneqq \sum_{-\infty < i}^{N' - 1} b_i\tau^i$.
    Note that by construction of $\gamma_{\mathfrak{s}}$ and definition of $\beta$, for all $\beta' \in \mathfrak{s}^c$ and all $\alpha'' \in {\mathfrak{s}}$,
    \begin{align}\label{eq: distance from gamma}
        \val(\beta' - \gamma_{\mathfrak{s}}) \leq \val(\beta - \gamma_{\mathfrak{s}}) = N \val(\tau) < N'\val(\tau) = \val(\alpha' - \gamma_{\mathfrak{s}}) \leq \val(\alpha'' - \gamma_{\mathfrak{s}}).
    \end{align}
    Thus, $\gamma_{\mathfrak{s}}$ has the desired property.  Finally, we show that $\gamma_{\mathfrak{s}} \in K$. As $L/K$ is tamely ramified and the residue field of $K$ is algebraically closed, $\text{char } K \nmid [L:K]$, and hence $L/K$ is separable and thus Galois. Suppose for the sake of contradiction that $\gamma_{\mathfrak{s}} \not \in K$, so that there exists $\sigma \in \text{Gal}(L/K)$ such that $\sigma(\gamma_{\mathfrak{s}}) \not = \gamma_{\mathfrak{s}}$. As the residue field of $K$ is algebraically closed, for all $i$, $\sigma(b_i) = b_i$. Thus, we see there must exist some minimal $M < N'$ such that $\sigma(\tau^M) \neq \tau^M$ and $b_M \neq 0$. However, this shows that $\sigma(\alpha) \neq \alpha'$, and hence as ${\mathfrak{s}}$ is Galois invariant, there exists $\alpha_{\min} \in {\mathfrak{s}}$ such that $|\alpha - \alpha_{\min}| = |\tau^M| > |\tau^{N'}| = |\alpha - \alpha'| = \epsilon_{\mathfrak{s}}$, contradicting the definition of $\epsilon_{\mathfrak{s}}$.
\end{proof}

\begin{remark}
    For a cluster $\mathfrak{s}$, there must exist some cluster $\mathfrak{s}'$ such that $\gamma_{\mathfrak{s}} \in X_{\mathfrak{s}'}$. From \zcref{eq: distance from gamma}, for any $\alpha \in \mathfrak{s}$ and $\beta' \in \mathfrak{s}^c$, $|\beta' -\gamma_\mathfrak{s}| > |\alpha - \gamma_{\mathfrak{s}}|$, and thus the collection of roots closest to $\gamma_{\mathfrak{s}}$ is a subset of $\mathfrak{s}$, that is, $\mathfrak{s}' \subseteq \mathfrak{s}$. 
    When $\gamma_{\mathfrak{s}}$ is closest to some root $\alpha$, then by Galois-invariance of the valuation, $\val(\gamma_{\mathfrak{s}} - \alpha')$ is constant for every conjugate $\alpha'$ of $\alpha$.  We therefore conclude that $\mathfrak{s}' \subseteq \mathfrak{s}$ must be Galois-invariant, and furthermore if $\mathfrak{s}$ contains no Galois-invariant subclusters, then $\gamma_{\mathfrak{s}} \in X_{\mathfrak{s}}$.
\end{remark}

\begin{lem}\label{lem: cS integer}
    Suppose that the residue field of $K$ is algebraically closed and that the roots of $F(x)$ are tamely ramified. Let $\mathfrak{s}$ be a Galois-invariant cluster. Then $c_{\mathfrak{s}} \in \mathbb{Z}$.
\end{lem}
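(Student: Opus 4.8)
The plan is to start from the explicit formula $c_{\mathfrak{s}} = \val(a_d) + \sum_{\beta \in \mathfrak{s}^c} \val(\alpha - \beta)$ supplied by \Cref{lem: valuation of polynomial}, which holds for any $\alpha \in \mathfrak{s}$. Since $a_d$ is the leading coefficient of $F \in K[x]$, we have $a_d \in K^{\times}$, and hence $\val(a_d)$ lies in the value group $\mathbb{Z}$ of $K$. So it suffices to prove that $\sum_{\beta \in \mathfrak{s}^c} \val(\alpha - \beta) \in \mathbb{Z}$ for a single $\alpha \in \mathfrak{s}$ (the sum being independent of that choice by \Cref{lem: valuation of polynomial}). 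If $\mathfrak{s}^c = \emptyset$ the sum is $0$ and we are done, so assume $\mathfrak{s}^c$ is nonempty.

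The key tool is \Cref{lem: shifting argument}, which applies since the residue field of $K$ is algebraically closed, the roots of $F$ are tamely ramified, and $\mathfrak{s}$ is Galois-invariant: it produces $\gamma_{\mathfrak{s}} \in K$ and $\alpha_{\min} \in \mathfrak{s}$ with $\mathfrak{s} = \{\alpha \mid \val(\alpha - \gamma_{\mathfrak{s}}) \geq \val(\alpha_{\min} - \gamma_{\mathfrak{s}})\}$. Fix $\alpha \in \mathfrak{s}$. For every $\beta \in \mathfrak{s}^c$ we then have $\val(\beta - \gamma_{\mathfrak{s}}) < \val(\alpha_{\min} - \gamma_{\mathfrak{s}}) \leq \val(\alpha - \gamma_{\mathfrak{s}})$, so the strong triangle inequality gives $\val(\alpha - \beta) = \val\bigl((\alpha - \gamma_{\mathfrak{s}}) - (\beta - \gamma_{\mathfrak{s}})\bigr) = \val(\beta - \gamma_{\mathfrak{s}})$. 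Summing over $\beta$, $\sum_{\beta \in \mathfrak{s}^c} \val(\alpha - \beta) = \sum_{\beta \in \mathfrak{s}^c} \val(\gamma_{\mathfrak{s}} - \beta) = \val\bigl(H(\gamma_{\mathfrak{s}})\bigr)$, where $H(x) \coloneqq \prod_{\beta \in \mathfrak{s}^c}(x - \beta)$.

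To finish I would check that $H(\gamma_{\mathfrak{s}}) \in K^{\times}$. Since $F \in K[x]$, the family $\{\alpha_i\}$ of its roots is stable under $\text{Gal}(K^{\text{sep}}/K)$, and as $\mathfrak{s}$ is Galois-invariant so is its complement $\mathfrak{s}^c$; hence the coefficients of $H$ are fixed by $\text{Gal}(K^{\text{sep}}/K)$, so $H \in K[x]$ and $H(\gamma_{\mathfrak{s}}) \in K$. Its valuation then lies in $\mathbb{Z}$ as soon as it is nonzero, and it is: if $H(\gamma_{\mathfrak{s}}) = 0$ then $\gamma_{\mathfrak{s}}$ equals some root $\beta \in \mathfrak{s}^c$, but $\val(\gamma_{\mathfrak{s}} - \gamma_{\mathfrak{s}}) = \infty \geq \val(\alpha_{\min} - \gamma_{\mathfrak{s}})$ would force $\gamma_{\mathfrak{s}} \in \mathfrak{s}$, a contradiction. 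Therefore $c_{\mathfrak{s}} = \val(a_d) + \val\bigl(H(\gamma_{\mathfrak{s}})\bigr) \in \mathbb{Z}$. The only real obstacle is the reduction to ``annulus type'': for a general Galois-invariant cluster the distances $|\alpha - \beta|$ need not equal $|\beta - \gamma_{\mathfrak{s}}|$, and it is exactly \Cref{lem: shifting argument} that lets us translate by an element of $K$ into the situation where the strong triangle inequality converts $\sum_{\beta} \val(\alpha - \beta)$ into the valuation of a value, at a $K$-point, of a polynomial defined over $K$.
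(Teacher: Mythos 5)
Your proof is correct, and its overall structure mirrors the paper's: both first invoke \Cref{lem: shifting argument} to put $\mathfrak{s}$ into ``annulus form'' about a point $\gamma_{\mathfrak{s}} \in K$, both observe that $\mathfrak{s}^c$ is Galois-stable, and both use the strong triangle inequality with $\val(\beta - \gamma_{\mathfrak{s}}) < \val(\alpha - \gamma_{\mathfrak{s}})$ for $\beta \in \mathfrak{s}^c$, $\alpha \in \mathfrak{s}$ to collapse $\val(\alpha - \beta)$. The place you diverge is the endgame. The paper translates so that $\gamma_{\mathfrak{s}} = 0$, groups $\mathfrak{s}^c$ into Galois orbits with minimal polynomials $f_\eta$, evaluates each $f_\eta$ at $\alpha_{\min}$ (which typically lies outside $K$), obtains $\val(f_\eta(\alpha_{\min})) = (\deg f_\eta)\cdot\val(\eta)$, and then needs a Newton polygon argument to see this is an integer. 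You instead assemble the whole complement into one polynomial $H(x) = \prod_{\beta \in \mathfrak{s}^c}(x - \beta) \in K[x]$ and evaluate it at the $K$-rational point $\gamma_{\mathfrak{s}}$; once you rule out $H(\gamma_{\mathfrak{s}}) = 0$ (as you do), the conclusion $\val(H(\gamma_{\mathfrak{s}})) \in \mathbb{Z}$ is immediate from $H(\gamma_{\mathfrak{s}}) \in K^{\times}$, with no Newton polygon needed. This is a genuine, if modest, simplification: evaluating the $K$-polynomial at a $K$-point rather than at $\alpha_{\min}$ makes the integrality self-evident and removes the only nontrivial step in the paper's finish. (One could streamline the paper's own argument in exactly this way: since $(\deg f_\eta)\val(\eta) = \val(f_\eta(0))$ is the valuation of the constant term of $f_\eta \in K[x]$, the Newton polygon appeal was already avoidable.) The one point to be careful about, which you handle implicitly and correctly, is that $\mathfrak{s}^c$ should be read as a multiset of roots so that $H$ really divides $F/a_d$ over $K$; Galois preserves multiplicities, so $H \in K[x]$ still holds.
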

\begin{proof}
    Note that we can apply a rational change of coordinates: if $\gamma \in K$, consider $F(x + \gamma)$. The set ${\mathfrak{s}}_\gamma=\{ \alpha - \gamma \mid \alpha \in {\mathfrak{s}}\}$ is a Galois-invariant cluster for $F(x+ \gamma)$ and $c_{{\mathfrak{s}}_\gamma} = c_{\mathfrak{s}}$. By \zcref{lem: shifting argument}, we can assume that there exists $\alpha_\text{min}$ such that ${\mathfrak{s}} = \{ \alpha \mid \val(\alpha) \geq \val(\alpha_\text{min})\}$.
    
    As $\val(a_d) \in \mathbb{Z}$, we must show that $\sum_{\beta \in \mathfrak{s}^c} \val(\alpha_\text{min} - \beta) \in \mathbb{Z}$. Both ${\mathfrak{s}}$ and the set of roots $\{\alpha_i\}$ are Galois invariant, so ${\mathfrak{s}}^c$ is as well. For each Galois orbit contained in ${\mathfrak{s}}^c$, pick some representative $\eta$ with minimal polynomial $f_\eta$, and let $\mathcal{M} \subseteq {\mathfrak{s}}^c$ be the set of all $\eta$. The sum $\sum_{\beta \in \mathfrak{s}^c} \val(\alpha_\text{min}-\beta)$ equals $\sum_{\eta \in \mathcal{M}} \val(f_{\eta}(\alpha_\text{min}))$. We show that for all $\eta \in \mathcal{M}$, $\val(f_{\eta}(\alpha_\text{min})) \in \mathbb{Z}$.

    For all $\beta \in \mathfrak{s}^c$, $\val(\beta) < \val(\alpha_\text{min})$, specifically, we have that for all $\eta \in \mathcal{M}$, $\val(\eta) < \val(\alpha_\text{min})$. As $f_\eta$ is irreducible, all roots have the same valuation. Thus, by the strong triangle inequality,
    \begin{align*}
        \val(f_\eta(\alpha_\text{min}))  &= \val \left( \prod_{\beta \mid f_\eta(\beta) = 0} ( \alpha_\text{min} - \beta) \right) \\
        &= \sum_{\beta \mid f_\eta(\beta) = 0} \val(\alpha_\text{min} - \beta) = \sum_{\beta \mid f_\eta(\beta) = 0} \val(\beta) = (\deg f_\eta) \cdot \val(\eta).
    \end{align*}
    As $f_\eta$ is the minimal polynomial of $\eta$, a Newton polygon argument shows that the denominator of $\val(\eta)$ divides $\deg f_\eta$, and hence $\val(f_\eta(\alpha_\text{min})) \in \mathbb{Z}$ as desired.
\end{proof}

\begin{remark}
    Neither \zcref{lem: shifting argument} nor \zcref{lem: cS integer} necessarily hold if the roots of $F(x)$ are wildly ramified. Consider $F(x) = (x^3 - 3)(x^2 - 3)$ over $\mathbb{Q}_3$. Let $\zeta_3 = \frac{-1 + \sqrt{-3}}{2}$ be a root of the second cyclotomic polynomial, $x^2 + x + 1$, so that the roots of $F(x)$ are $\{ \sqrt{3}, - \sqrt{3} , \sqrt[3]{3}, \zeta_3 \sqrt[3]{3}, \zeta_3^2 \sqrt[3]{3} \}$, and let $\mathfrak{s} = \{\sqrt[3]{3},\zeta_3 \sqrt[3]{3}, \zeta_3^2 \sqrt[3]{3}\}$. As $\mathfrak{s} = B_{|\sqrt[3]{3}|}(\sqrt[3]{3}) \cap \{ \alpha_i \}$, $\mathfrak{s}$ is a Galois-invariant cluster.  For all $\gamma \in K$, either $\val(\gamma) \geq 1$, in which case $\val(\alpha - \gamma) = \val(\alpha)$ for all $\alpha$, or $\val(\gamma) \leq 0$, in which case $\val(\alpha-\gamma) = \val(\gamma)$ for all $\alpha$.  Therefore, no $\gamma$ can satisfy the conclusions of \zcref{lem: shifting argument} for $\mathfrak{s}$.  Further,
    \begin{align*}
        c_{\mathfrak{s}} = \val(1) + \val\left(\sqrt[3]{3} - \sqrt{3}\right)  + \val\left(\sqrt[3]{3} + \sqrt{3} \right) = \frac{2}{3}.
    \end{align*}
\end{remark}

\begin{remark}
    The proof of \zcref{lem: cS integer} shows that if there exists $\alpha_{\text{min}} \in \mathfrak{s}$ such that $\mathfrak{s} = \{\alpha \mid \val(\alpha) \geq \val(\alpha_{\text{min}}) \}$, then $c_\mathfrak{s}$ is an integer.
\end{remark}

\section{Degree sets of superelliptic curves}\label{sec: sup curves}

We study the degree set of a superelliptic curve $C/K$ via the affine open $U \subseteq C$ such that $U \simeq V(y^q - F(x))\smallsetminus\{(\alpha_i,0)\}$.  We consider the $x$-coordinate map $x\colon C \to \mathbb{P}^1$.

\subsection{Obstructing points of arbitrary degree} We first use the valuation to show that under certain modular arithmetic conditions, we cannot obtain points of arbitrary degree.

\begin{lem}\label{lem: no points not zero mod q}
    Suppose that the residue field of $K$ is algebraically closed and that every root of $F(x)$ is tamely ramified. Fix a Galois-invariant cluster $\mathfrak{s}$. Let $\gamma_{\mathfrak{s}}$ be constructed from $\mathfrak{s}$ as in \zcref{lem: shifting argument}. Let $r$ be a natural number not divisible by $q$. If there exists $P \in U(\overline{K})$ of degree $r$ with $x(P) \in X_{\mathfrak{s}}$, then one of the following holds:
    \begin{enumerate}[label=$(\roman*)$]
        \item\label{condition: bS not 0 mod q first lemma} $|\mathfrak{s}| \not \equiv 0 \pmod{q}$, or
        \item\label{condition: bS and cS both 0 mod q first lemma} $|\mathfrak{s}| \equiv 0 \pmod q$ and $c_{\mathfrak{s}} \equiv 0 \pmod{q}$, or
        \item\label{condition: constant term 0 mod q first lemma} $0 \in X_{\mathfrak{s}}$ and $\val(F(0)) \equiv 0 \pmod{q}$, or
        \item\label{condition: F(gamma) mod q first lemma} $\gamma_{\mathfrak{s}} \in X_{\mathfrak{s}}$ and $\val(F(\gamma_{\mathfrak{s}})) \equiv 0 \pmod{q}$.
    \end{enumerate}
\end{lem}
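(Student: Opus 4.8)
The plan is to pin $P=(x_0,y_0)$ to a point with $K(x_0,y_0)=K(x_0)$ and then squeeze all the modular information out of the single identity $q\val(y_0)=\val(F(x_0))$, where the right side is computed by \Cref{lem: valuation of polynomial}. First I would dispose of the cheap alternatives: if $q\nmid|\mathfrak{s}|$ we are in case \ref{condition: bS not 0 mod q first lemma}, and if $q\mid c_{\mathfrak{s}}$ we are in case \ref{condition: bS and cS both 0 mod q first lemma}, so from now on assume $q\mid|\mathfrak{s}|$ and $q\nmid c_{\mathfrak{s}}$, with the goal of forcing case \ref{condition: F(gamma) mod q first lemma}. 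Next I would show $y_0\in K(x_0)$: since $P\in U$, $F(x_0)\ne 0$, so by \Cref{lem: X^p - a factors but better} either $y^q-F(x_0)$ is irreducible over $K(x_0)$ — impossible, since then $q\mid[K(x_0,y_0):K(x_0)]\mid r$, contradicting $q\nmid r$ — or $F(x_0)\in K(x_0)^{\times q}$; as $\mu_q\subseteq K$ (residue characteristic $\ne q$, residue field algebraically closed, $K$ Henselian), this yields $y_0\in K(x_0)$. Thus $N:=K(x_0)$ has $[N:K]=r$, the ramification index $e=e(N/K)$ divides $r$ so $\gcd(q,e)=1$, and $\val(N^\times)=\tfrac1e\mathbb{Z}$. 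Writing $\rho:=\val(x_0-\alpha)$ for $\alpha\in\mathfrak{s}$, \Cref{lem: valuation of polynomial} gives $q\val(y_0)=|\mathfrak{s}|\rho+c_{\mathfrak{s}}$, with $c_{\mathfrak{s}}\in\mathbb{Z}$ by \Cref{lem: cS integer}.

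The argument then turns on a trichotomy for $\val(x_0-\gamma_{\mathfrak{s}})$ versus $m:=\val(\alpha_{\min}-\gamma_{\mathfrak{s}})$, where $\gamma_{\mathfrak{s}}\in K$ and $\alpha_{\min}\in\mathfrak{s}$ are furnished by \Cref{lem: shifting argument}, so that $\mathfrak{s}=\{\alpha_i:\val(\alpha_i-\gamma_{\mathfrak{s}})\ge m\}$. The crucial input, which I would read off from the construction in that lemma, is that $\val(\alpha-\gamma_{\mathfrak{s}})\ge R$ for every $\alpha\in\mathfrak{s}$, where $R:=\min_{\alpha\ne\alpha'\in\mathfrak{s}}\val(\alpha-\alpha')$; combined with the opposite inequality $R\ge m$ coming from the annulus description (for $\alpha,\alpha'\in\mathfrak{s}$, $\val(\alpha-\alpha')\ge\min(\val(\alpha-\gamma_{\mathfrak{s}}),\val(\alpha'-\gamma_{\mathfrak{s}}))\ge m$), this gives $R=m$. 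Since $x_0\in X_{\mathfrak{s}}$ forces $\val(\alpha-\alpha')\ge\rho$ for $\alpha,\alpha'\in\mathfrak{s}$, we also get $\rho\le R=m$.

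Now the three cases. If $\val(x_0-\gamma_{\mathfrak{s}})<m$, then for $\alpha\in\mathfrak{s}$ the strong triangle inequality gives $\rho=\val(x_0-\alpha)=\val(x_0-\gamma_{\mathfrak{s}})\in\tfrac1e\mathbb{Z}$; if $\val(x_0-\gamma_{\mathfrak{s}})=m$, then $\rho\ge m$ by the same inequality and $\rho\le m$ from above, so $\rho=m=\val(x_0-\gamma_{\mathfrak{s}})\in\tfrac1e\mathbb{Z}$. In both cases $|\mathfrak{s}|\rho\in|\mathfrak{s}|\tfrac1e\mathbb{Z}\subseteq q\tfrac1e\mathbb{Z}$, so $c_{\mathfrak{s}}=q\val(y_0)-|\mathfrak{s}|\rho\in q\tfrac1e\mathbb{Z}$; intersecting with $\mathbb{Z}$ (using $\gcd(q,e)=1$) gives $q\mid c_{\mathfrak{s}}$, a contradiction. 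In the remaining case $\val(x_0-\gamma_{\mathfrak{s}})>m$, a short ultrametric computation shows that every $\alpha\in\mathfrak{s}$ satisfies $\val(\alpha-\gamma_{\mathfrak{s}})=m=\rho$ and that $\mathfrak{s}$ is precisely the set of roots closest to $\gamma_{\mathfrak{s}}$, so $\gamma_{\mathfrak{s}}\in X_{\mathfrak{s}}$ and $\gamma_{\mathfrak{s}}$ is not a root; then \Cref{lem: valuation of polynomial} gives $\val(F(\gamma_{\mathfrak{s}}))=|\mathfrak{s}|m+c_{\mathfrak{s}}=|\mathfrak{s}|\rho+c_{\mathfrak{s}}=q\val(y_0)\in q\tfrac1e\mathbb{Z}$, and since $F(\gamma_{\mathfrak{s}})\in K^\times$ its valuation is an integer in $q\tfrac1e\mathbb{Z}\cap\mathbb{Z}=q\mathbb{Z}$, i.e.\ case \ref{condition: F(gamma) mod q first lemma} holds.

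The main obstacle is the middle case $\val(x_0-\gamma_{\mathfrak{s}})=m$: a priori $\rho$, hence $|\mathfrak{s}|\rho$, could acquire a factor of $q$ in its denominator coming from the ramification of $\alpha$ over $K$ (the roots are only assumed tamely ramified, so $q$ may well divide $e(K(\alpha)/K)$), which would wreck the modular bookkeeping — and indeed one can check this failure mode is consistent with every purely valuation-theoretic constraint. What rules it out is exactly the identity $R=m$, i.e.\ that $\gamma_{\mathfrak{s}}$ is chosen ``deep enough'' that the distance from $\mathfrak{s}$ to $\gamma_{\mathfrak{s}}$ equals the diameter of $\mathfrak{s}$; this fact is implicit in the proof of \Cref{lem: shifting argument} rather than its statement, so extracting and invoking it cleanly is the delicate point. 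A secondary, purely organizational, difficulty is the repeated juggling of the two value groups $\mathbb{Z}$ and $\tfrac1e\mathbb{Z}$ and the systematic use of $\gcd(q,e)=1$ together with the integrality of $c_{\mathfrak{s}}$ and of $\val(F(\gamma_{\mathfrak{s}}))$.
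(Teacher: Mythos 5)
Your proof is correct, and while it uses the same core ingredients as the paper's proof (Lemma \ref{lem: X^p - a factors but better} to get $F(x_0)\in K(x_0)^{\times q}$, the identity of Lemma \ref{lem: valuation of polynomial}, the shift $\gamma_{\mathfrak{s}}\in K$ from Lemma \ref{lem: shifting argument}, and the integrality $c_{\mathfrak{s}}\in\mathbb{Z}$ from Lemma \ref{lem: cS integer}), it organizes the case analysis differently. The paper splits on the three possibilities for $\val(x_0-\alpha)$ relative to $\val(x_0)$ and $\val(\alpha)$, and only in the third case performs the shift by $\gamma_{\mathfrak{s}}$, recursing back into the first two cases for the shifted data. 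You instead compare $\val(x_0-\gamma_{\mathfrak{s}})$ directly against the threshold $m=\val(\alpha_{\min}-\gamma_{\mathfrak{s}})$; this is cleaner because the shift happens once up front, and the ultrametric bookkeeping is uniform. The price is that you must extract the equality $R=m$ (diameter of $\mathfrak{s}$ equals its distance to $\gamma_{\mathfrak{s}}$), which as you note is implicit in the construction in Lemma \ref{lem: shifting argument} but not in its statement — your derivation of it is correct, though it is worth flagging that it uses $R=N'\val(\tau)$ for the specific $\alpha$ used in that construction, after which the strong triangle inequality propagates the bound to all of $\mathfrak{s}$. A genuine payoff of your route is that it shows condition \ref{condition: constant term 0 mod q first lemma} is in fact subsumed: under $q\mid|\mathfrak{s}|$ and $q\nmid c_{\mathfrak{s}}$ your cases (a), (b) always produce \ref{condition: bS and cS both 0 mod q first lemma} (a contradiction), so case (c) and hence \ref{condition: F(gamma) mod q first lemma} always holds; the paper's Case (2) outputs \ref{condition: constant term 0 mod q first lemma}, but one can check that in that case either $\gamma_{\mathfrak{s}}=0$ (so \ref{condition: constant term 0 mod q first lemma} coincides with \ref{condition: F(gamma) mod q first lemma}) or $\val(\alpha)\in\mathbb{Z}$ (so \ref{condition: constant term 0 mod q first lemma} forces \ref{condition: bS and cS both 0 mod q first lemma}). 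Relatedly, the paper's parenthetical claim in its Case (2) that $\val(F(0))=\val(F(\gamma_{\mathfrak{s}}))$ need not hold when $\gamma_{\mathfrak{s}}\ne 0$, but this does not affect its proof since the conclusion drawn there is only \ref{condition: constant term 0 mod q first lemma}. One small caveat to keep track of in your write-up: when $|\mathfrak{s}|=1$ you have $R=m=+\infty$ and your cases (b), (c) are vacuous (always case (a)), and in case (c) you should note that $|\mathfrak{s}|>1$ and $\gamma_{\mathfrak{s}}$ is not a root before invoking Lemma \ref{lem: valuation of polynomial} at $\gamma_{\mathfrak{s}}$ — you do address the latter.
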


\begin{remark}
    The quantities appearing in conditions \ref{condition: bS not 0 mod q first lemma} and \ref{condition: bS and cS both 0 mod q first lemma} in \zcref{lem: no points not zero mod q} are the coefficients appearing in the slope formula, see \zcref{rem: slope formula}.
\end{remark}

\begin{proof}[Proof of \zcref{lem: no points not zero mod q}]
    Let $(x_0,y_0) \in U(\overline{K})$ with $[K(x_0,y_0):K] = r$. By \zcref{lem: X^p - a factors but better}, either $K(x_0, y_0)$ is a degree $q$ extension of $K(x_0)$, or $F(x_0) \in K(x_0)^{\times q}$. The first case is not possible by assumption, so we must have that $F(x_0) \in K(x_0)^{\times q}$ and that $K(x_0, y_0) = K(x_0)$. As the residue field of $K$ is algebraically closed, $K(x_0)$ is totally ramified with value group $\frac{1}{r}\mathbb{Z}$, so that $\val(F(x_0))$ is a multiple of $q$ in $\frac{1}{r}\mathbb{Z}$. As $x_0 \in X_{\mathfrak{s}}$, by \zcref{lem: valuation of polynomial}, for any $\alpha \in \mathfrak{s}$,
    \begin{align*}
        \frac{\val(F(x_0))}{q} = \frac{|\mathfrak{s}|}{q}\val(x_0 - \alpha) + \frac{c_{\mathfrak{s}}}{q}.
    \end{align*}
    We consider the possibilities for $\val(x_0 - \alpha)$, which by strong triangle inequality, are as follows: 
    \begin{enumerate}
        \item $\val(x_0 - \alpha) = \val(x_0) \leq v(\alpha)$ \label{case: pigeonhole}
        \item $\val(x_0 - \alpha) = \val(\alpha) < \val(x_0)$ \label{case: valuation of root}
        \item $\val(x_0 - \alpha) > \val(\alpha) = \val(x_0)$. \label{case: close to root}
    \end{enumerate}

    Case \ref{case: pigeonhole}: First suppose that $\val(x_0 - \alpha) = \val(x_0)$. Then, as $\frac{\val(F(x_0))}{q} \in \frac{1}{r} \mathbb{Z}$, we have that the map $\frac{1}{r}\mathbb{Z} \to \frac{1}{r}\mathbb{Z}$ given by $x \mapsto \frac{|\mathfrak{s}|}{q}x + \frac{c_{\mathfrak{s}}}{q}$ satisfies the conditions of \zcref{lem: sol in interval}, so that $|\mathfrak{s}| \not \equiv 0 \pmod q$ or $|\mathfrak{s}| \equiv 0 \pmod q$ and $c_{\mathfrak{s}} \equiv 0 \pmod q$.
    
    Case \ref{case: valuation of root}: Now assume that $\val(x_0 - \alpha) = \val(\alpha)$ and $\val(x_0) > \val(\alpha)$. As case \ref{case: ball type} of \zcref{lem: types of S} does not hold, case \ref{case: annulus type} must hold, so there exists $\alpha_\text{min}$ such that $\mathfrak{s} = \{\alpha' \mid \val(\alpha') \geq \val(\alpha_\text{min})\}$. As $\val(x_0 - \alpha)$ is independent of the choice of $\alpha \in \mathfrak{s}$, all roots of $\mathfrak{s}$ must have the same valuation, which further must be the maximal valuation among all roots of $F(x)$. Thus, for any root $\alpha'$ of $F(x)$, $\val(x_0) > \val(\alpha')$. We see that $0 \in X_{\mathfrak{s}}$ as $0$ is closest to the roots of maximal valuation, so
    \begin{align*}
        \val(F(x_0)) &= \val(a_d) + \sum_i \val(x_0 - \alpha_i) = \val(a_d) + \sum_i \val(\alpha_i) = \val(F(0)).
    \end{align*}
    Thus, $\val(F(x_0)) \in \mathbb{Z}$. As $\val(F(x_0))$ is a multiple of $q$ in $\frac{1}{r}\mathbb{Z}$ and $q$ does not divide $r$, $\val(F(x_0)) = \val(F(0)) = \val(F(\gamma_{\mathfrak{s}}))$ is a multiple of $q$ in $\mathbb{Z}$.
    
    Case \ref{case: close to root}: Finally, suppose that $\val(x_0 - \alpha) > \val(\alpha) = \val(x_0)$.
    By \zcref{lem: shifting argument}, there exists $\gamma_{\mathfrak{s}} \in K$ and $\alpha_{\text{min}} \in \mathfrak{s}$ such that $\mathfrak{s} = \{\alpha \mid \val(\alpha - \gamma_{\mathfrak{s}}) \geq \val(\alpha_\text{min} - \gamma_{\mathfrak{s}}) \}$. Let $G(x) = F(x + \gamma_{\mathfrak{s}})$, consider the isomorphic curve $y^q = G(x)$, and let $\mathfrak{s}'$ be the $G(x)$-cluster $\{ \alpha - \gamma_S \mid \alpha \in \mathfrak{s}\}$. 
    
    We show that for all $\alpha \in \mathfrak{s}$, $x_0 - \gamma_{\mathfrak{s}}$ is not contained in $B_{|\alpha - \gamma_{\mathfrak{s}}|}(\alpha - \gamma_{\mathfrak{s}})$. If $|\mathfrak{s}| = 1$, then $\gamma_{\mathfrak{s}} = \alpha$ and as $P \in U(\overline{K})$, $x(P)$ does not equal $\alpha$, and hence $x_0 - \gamma_{\mathfrak{s}} \not \in B_0(0)$. Now suppose that $|\mathfrak{s}| > 1$, and let $\alpha' \in \mathfrak{s}$ be such that $|\alpha - \alpha'|$ is maximal as in the proof of \zcref{lem: shifting argument}. By construction of $\gamma_{\mathfrak{s}}$, $|\alpha - \gamma_{\mathfrak{s}}| = |\alpha - \alpha'|$. As $x_0 \in X_{\mathfrak{s}}$, we have that
    \begin{align*}
        |x_0 - \gamma_{\mathfrak{s}} - (\alpha - \gamma_{\mathfrak{s}})| = |x_0 - \alpha| &= |x_0 - \alpha' + \alpha' - \alpha| \\
        &\leq \max \{|x_0 - \alpha'|, |\alpha' - \alpha|\} = \max \{|x_0 - \alpha|, |\alpha - \gamma_{\mathfrak{s}}|\}.
    \end{align*}
    If $|\alpha - \gamma_{\mathfrak{s}}| > |x_0 - \alpha|$, then the strong inequality gives a contradiction. Thus, $|\alpha - \gamma_{\mathfrak{s}}| \leq |x_0 - \alpha|$, and hence $x_0 - \gamma_{\mathfrak{s}} \not \in B_{|\alpha - \gamma_{\mathfrak{s}}|}(\alpha - \gamma_{\mathfrak{s}})$ as desired. 
    Then, $\val((x_0 - \gamma_\mathfrak{s}) - (\alpha - \gamma_\mathfrak{s})) \leq \val(\alpha - \gamma_\mathfrak{s})$, and hence $x_0 - \gamma_{\mathfrak{s}}$ must fall in either case \ref{case: pigeonhole} or case \ref{case: valuation of root} when considering the $G(x)$-cluster $\mathfrak{s}'$.  As $|\mathfrak{s}'| = |\mathfrak{s}|$ and $c_{\mathfrak{s}'} = c_{\mathfrak{s}}$, in case \ref{case: pigeonhole} the congruence conditions hold. Finally, $G(0) = F(\gamma_{\mathfrak{s}})$, and thus in case \ref{case: valuation of root} we have that $\val(F(\gamma_{\mathfrak{s}})) \in q\mathbb{Z}$.
\end{proof}

\begin{remark}
        If $\gamma_{\mathfrak{s}} \neq 0$, for all $\alpha \in \mathfrak{s}$, $v(\alpha) = v(\gamma_{\mathfrak{s}})$.  If there exist $\alpha, \alpha' \in \mathfrak{s}$ such that $\val(\alpha_j) \neq \val(\alpha_{j'})$, then $\gamma_{\mathfrak{s}} = 0$. Thus, for many Galois-invariant ${\mathfrak{s}}$, $\gamma_{\mathfrak{s}}$ is not contained in $X_{\mathfrak{s}}$. 
\end{remark}

\begin{remark}\label{rem: compute v(F(gamma))}
    Define the \textbf{depth} of $\mathfrak{s}$ by $d_\mathfrak{s} \coloneqq \min_{\alpha, \alpha'} \val(\alpha - \alpha')$.  When $|{\mathfrak{s}}| > 1$ and $\gamma_{\mathfrak{s}} \in X_{\mathfrak{s}}$, we have that $\val(\gamma_{\mathfrak{s}} - \alpha_j) = d_{\mathfrak{s}}$, and hence $\val(F(\gamma_{\mathfrak{s}})) = |\mathfrak{s}| d_{\mathfrak{s}} +c_{\mathfrak{s}}$.
\end{remark}

\subsection{Constructing points of large degree} We prove a strengthening of the converse of \zcref{lem: no points not zero mod q}. Using Hensel's lemma, we show that if there exists a point $P \in U(\overline{K})$ with degree not a multiple of $q$, then the degree set contains points of all sufficiently large degrees.

\begin{lem}\label{lem: congruence conditions imply N>r}
    Suppose that the residue field of $K$ is algebraically closed and that every root of $F(x)$ is tamely ramified. Fix a Galois-invariant cluster $\mathfrak{s}$. If one of the following holds:
    \begin{enumerate}[label=$(\roman*)$]
        \item\label{condition: bS not 0 mod q} $|\mathfrak{s}| \not \equiv 0 \pmod{q}$, or
        \item\label{condition: bS and cS both 0 mod q} $|\mathfrak{s}| \equiv 0 \pmod q$ and $c_{\mathfrak{s}} \equiv 0 \pmod{q}$, or
        \item\label{condition: constant term 0 mod q} $0 \in X_{\mathfrak{s}}$ and $\val(F(0)) \equiv 0 \pmod{q}$, or
        \item\label{condition: gamma term 0 mod q} $\gamma_{\mathfrak{s}} \in X_{\mathfrak{s}}$ and $\val(F(\gamma_{\mathfrak{s}})) \equiv 0 \pmod{q}$,
    \end{enumerate}
    then there exists $r_0 \in \mathbb{N}$ such that for all $r \geq r_0$ with $\gcd(r,q) = 1$, there exists a point $P \in U(\overline{K})$ of degree $r$ with $x(P) \in X_{\mathfrak{s}}$.
\end{lem}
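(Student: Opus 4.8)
The plan is to produce, for each sufficiently large $r$ coprime to $q$, an explicit point $(x_0, y_0) \in U(\overline{K})$ with $x_0 \in X_{\mathfrak{s}}$ and $[K(x_0):K] = r$. The strategy mirrors the case analysis in \Cref{lem: no points not zero mod q}, but run in reverse: we choose $x_0$ to lie in a totally ramified extension $K(\sqrt[r]{\pi})$ (which is the unique degree-$r$ extension by \Cref{lem: tamely ramified extensions}) with valuation $\val(x_0)$ prescribed so that $\val(F(x_0)) \in q\mathbb{Z} + \tfrac{q}{r}\mathbb{Z}$, i.e.\ $\val(F(x_0))$ is $q$ times an element of $\tfrac{1}{r}\mathbb{Z}$; then $F(x_0)$ is a $q$-th power times a unit, and since the residue field is algebraically closed we can adjust $x_0$ slightly to make $F(x_0)$ an exact $q$-th power in $K(x_0)$, giving the point $y_0 = \sqrt[q]{F(x_0)} \in K(x_0)$. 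The degree of the point is then exactly $r$ rather than a proper divisor because $\val(x_0)$ will be chosen with exact denominator $r$.

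First I would reduce to the two ``base'' cases \ref{condition: bS not 0 mod q} and \ref{condition: bS and cS both 0 mod q} (which control points with $\val(x_0 - \alpha) = \val(x_0)$, i.e.\ case \ref{case: pigeonhole} of the previous lemma), exactly as in the previous proof: in case \ref{condition: constant term 0 mod q} one works directly with $x_0$ of very large valuation so that $0 \in X_{\mathfrak{s}}$ and $\val(F(x_0)) = \val(F(0))$, and in case \ref{condition: gamma term 0 mod q} one translates by $\gamma_{\mathfrak{s}} \in K$ to the polynomial $G(x) = F(x+\gamma_{\mathfrak{s}})$ (using $G(0) = F(\gamma_{\mathfrak{s}})$, $|\mathfrak{s}'| = |\mathfrak{s}|$, $c_{\mathfrak{s}'} = c_{\mathfrak{s}}$ and \Cref{lem: shifting argument}, \Cref{rem: compute v(F(gamma))}), reducing to a point near the shifted cluster. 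So it suffices to handle: given the congruence on $|\mathfrak{s}|$ and $c_{\mathfrak{s}}$ modulo $q$ (or the hypothesis that $\val(F(x_0))$ is a fixed integer divisible by $q$ for suitable $x_0$), find $x_0 \in X_{\mathfrak{s}}$ of exact degree $r$ with $\val(F(x_0)) \in q\cdot\tfrac{1}{r}\mathbb{Z}$.

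The core of the argument is an application of \Cref{lem: sol in interval}: by \Cref{lem: valuation of polynomial}, on $X_{\mathfrak{s}}$ the function $x_0 \mapsto \val(F(x_0))$ is affine in $\val(x_0 - \alpha)$ with slope $|\mathfrak{s}|$ and intercept $c_{\mathfrak{s}}$, so the condition $\val(F(x_0))/q \in \tfrac{1}{r}\mathbb{Z}$ becomes a linear congruence in $\val(x_0-\alpha)$ of exactly the shape covered by \Cref{lem: sol in interval} with $b = q$. The divisibility hypothesis $\gcd(|\mathfrak{s}|, q) \mid rc$ needed there is guaranteed precisely by condition \ref{condition: bS not 0 mod q} ($\gcd = 1$) or \ref{condition: bS and cS both 0 mod q} ($q \mid |\mathfrak{s}|$ and $q \mid c_{\mathfrak{s}}$). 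Since $X_{\mathfrak{s}}$ contains an annulus of values $\val(x_0-\alpha)$ in some interval around $d_{\mathfrak{s}}$ of length at least (say) $1$ — or, in the cases reduced to case \ref{case: valuation of root}, an unbounded ray of large valuations — once $r$ is large enough this interval, intersected with $\tfrac{1}{r}\mathbb{Z}$, has length exceeding $\tfrac{q+1}{r}$, so \Cref{lem: sol in interval} furnishes an admissible value $t_0 = \val(x_0-\alpha) \in \tfrac{1}{r}\mathbb{Z}$ with exact denominator $r$. We then pick any $x_0$ in $X_{\mathfrak{s}} \cap K(\sqrt[r]{\pi})$ realizing $\val(x_0 - \alpha) = t_0$; such $x_0$ exists since $X_{\mathfrak{s}}$ is an open ball-complement whose defining data is $K^t$-rational (using tame ramification and \Cref{lem: tamely ramified generalized krasners}), and $K(\sqrt[r]{\pi})$ is dense enough in $\overline{K}$ in the relevant annulus.

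Finally I would promote ``$F(x_0)$ is a $q$-th power up to a unit'' to ``$F(x_0)$ is a $q$-th power'': write $F(x_0) = u\,\pi_0^{qm}$ with $\pi_0$ a uniformizer of $M := K(x_0)$ and $u \in \mathcal{O}_M^{\times}$; since $M$ has algebraically closed residue field and residue characteristic $p \neq q$, Hensel's lemma (applied to $T^q - u$) shows $u \in \mathcal{O}_M^{\times q}$, hence $F(x_0) \in M^{\times q}$, and we set $y_0 = \sqrt[q]{F(x_0)} \in M$. Then $(x_0, y_0) \in U(M)$ — it lies in $U$ rather than just $V(y^q - F(x))$ because $x_0 \in X_{\mathfrak{s}}$ means $x_0 \neq \alpha_i$ for all $i$, so $y_0 \neq 0$ — and $[M:K] = r$, giving a point of degree exactly $r$ with $x$-coordinate in $X_{\mathfrak{s}}$. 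The main obstacle is the bookkeeping needed to guarantee the degree is exactly $r$ (not a divisor) and that $x_0$ can simultaneously be taken in the unique degree-$r$ extension, in $X_{\mathfrak{s}}$, and with the prescribed valuation of $x_0 - \alpha$; this is where one must combine \Cref{lem: sol in interval}'s interval statement with the geometric description of $X_{\mathfrak{s}}$ from \Cref{lem: types of S} and the structure of tame extensions from \Cref{lem: tamely ramified extensions} and \Cref{lem: tamely ramified generalized krasners}, and where the threshold $r_0$ is produced.
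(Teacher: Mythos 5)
Your high‑level plan (reduce \ref{condition: gamma term 0 mod q} to \ref{condition: constant term 0 mod q} by translating $\gamma_{\mathfrak{s}}$ to $0$, handle \ref{condition: constant term 0 mod q} by taking $x_0$ of very large valuation, and handle \ref{condition: bS not 0 mod q}, \ref{condition: bS and cS both 0 mod q} by choosing $\val(x_0)$ in an annulus via \Cref{lem: sol in interval}) matches the paper's proof.  However, there is a genuine gap at the step you yourself flag as the ``main obstacle,'' and it is not merely bookkeeping.

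You assert that \Cref{lem: sol in interval} ``furnishes an admissible value $t_0 = \val(x_0 - \alpha) \in \tfrac{1}{r}\mathbb{Z}$ with exact denominator $r$,'' and then say one may ``pick any $x_0$ in $X_{\mathfrak{s}} \cap K(\sqrt[r]{\pi})$'' realizing that valuation.  Neither claim is justified.  \Cref{lem: sol in interval} only produces some $a/r \in \tfrac{1}{r}\mathbb{Z}$ in the interval with $\rho(a/r) \in \tfrac{1}{r}\mathbb{Z}$; the numerator $a$ lands in a fixed residue class mod $q$, and nothing forces $\gcd(a,r)=1$.  If $\gcd(a,r) = s > 1$, then an element of $K(\sqrt[r]{\pi})$ of valuation $a/r$ can easily generate the proper subfield $K(\pi^{1/(r/s)})$, giving a point of degree $r/s$ rather than $r$.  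One cannot in general fix this by simply choosing $a$ coprime to $r$: that would require an extension of \Cref{lem: sol in interval} that simultaneously imposes a congruence mod $q$, an interval constraint, and coprimality to $r$, which the cited lemma does not supply.  The paper circumvents this by an explicit two-step construction: writing $a/r = u/t$ in lowest terms, taking $x_0'$ a root of $x^t - \pi^u$ of degree $t$, then adding a perturbation $\varepsilon$ of strictly larger valuation with $[K(x_0')(\varepsilon):K(x_0')] = s = r/t$ (again a Newton-polygon argument), so that $x_0 = x_0' + \varepsilon$ has degree exactly $st = r$ while $\val(x_0) = a/r$ is unchanged.  This tower construction is the missing idea; without it, the degree of the produced point could be a proper divisor of $r$.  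The remainder of your argument (the translation reduction, using \Cref{lem: valuation of polynomial} to turn the problem into a congruence, and Hensel to extract the $q$-th root of the unit part) is sound and aligns with the paper.
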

\begin{proof}
    Following the argument in the proof of case \ref{case: close to root} of \zcref{lem: no points not zero mod q}, without loss of generality we can take $\gamma_{\mathfrak{s}} = 0$, so condition \ref{condition: gamma term 0 mod q} reduces to condition \ref{condition: constant term 0 mod q}.
    Suppose that condition \ref{condition: constant term 0 mod q} holds, so that $0 \in X_{\mathfrak{s}}$ and hence ${\mathfrak{s}}$ is the set of roots of maximal valuation. We let $r_0 = 1$ and construct points of all degrees $r \geq r_0$. Let $N$ be an integer greater than $\max_{\alpha \in \mathfrak{s}}\{\val(\alpha)\}$, and let $x_0 = \pi^{N + \frac{1}{r}}$. By construction, $x_0 \in X_{\mathfrak{s}}$, and hence by \zcref{lem: valuation of polynomial}, for any $\alpha \in \mathfrak{s}$,
    \begin{align*}
        \val(F(x_0)) = |\mathfrak{s}| \val(x_0 - \alpha) + c_{\mathfrak{s}} = |\mathfrak{s}|\val(\alpha) + c_{\mathfrak{s}} = \val(F(0)) \equiv 0 \pmod q.
    \end{align*}
    Thus, $(x_0,\sqrt[q]{F(x_0)})$ is a $\overline{K}$-point of degree $r$ over $K$.
    
    Now suppose that either condition \ref{condition: bS not 0 mod q} or condition \ref{condition: bS and cS both 0 mod q} holds. We again use the argument in the proof of case \ref{case: close to root} of \zcref{lem: no points not zero mod q} to reduce to the case of $\gamma_{\mathfrak{s}} = 0$.  By \zcref{lem: shifting argument}, there exists $\alpha_{\min}$ such that ${\mathfrak{s}}$ indexes the set $\{ \alpha \mid \val(\alpha) \geq \val(\alpha_{\min}) \}$. Let $N = \val(\alpha_{\min})$ and let $M = \max_{\beta \in \mathfrak{s}^c} \{\val(\beta)\}$. Let $r_0$ be the smallest integer greater than $\frac{q+1}{N-M} + 1$. We construct a point of degree $r$ for all $r \geq r_0$. Consider the affine map $\rho\colon (M,N) \cap \frac{1}{r}\mathbb{Z} \to \frac{1}{rq}\mathbb{Z}$ given by $\rho(x) = \frac{|\mathfrak{s}|}{q}   x + \frac{c_{\mathfrak{s}}}{q}$. By \zcref{lem: sol in interval}, there exists $\frac{a}{r} \in (M,N)$ such that $\rho\left( \frac{a}{r} \right) \in \frac{1}{r}\mathbb{Z}$. Write $\frac{a}{r} = \frac{su}{st}$ with $\gcd(u,t) = 1$, and let $x_0'$ be a root of $x^t - \pi^u$, so that $x_0'$ has degree $t$. Let $\tau$ be a uniformizer for $K(x_0')$ and let $R$ be an integer larger than $s\cdot \frac{u}{t}$ relatively prime to $s$. Let $\varepsilon$ be a root of $x^s - \tau^R$. A Newton polygon argument shows that the degree of $\varepsilon$ over $K(x_0')$ is $s$ and that $\val(\varepsilon) = \frac{R}{s} > \val(x_0')$. Let $x_0 = x_0' + \varepsilon$, so that $K(x_0)$ has degree $st = r$ over $K$, and $\val(x_0) = \val(x_0') = \frac{a}{r}$. As $M < \val(x_0) < N$, $x_0 \in X_{\mathfrak{s}}$, and further
    \begin{align*}
        \frac{\val(F(x_0))}{q} = \frac{|\mathfrak{s}|}{q}\val(x_0 - \alpha) + \frac{c_{\mathfrak{s}}}{q} = \frac{|\mathfrak{s}|}{q} \val(x_0) + \frac{c_{\mathfrak{s}}}{q} = \rho\left(\frac{a}{r}\right) \in \frac{1}{r}\mathbb{Z}.
    \end{align*}
    Thus, $\val(F(x_0))$ is a multiple of $q$ in $\frac{1}{r}\mathbb{Z}$, and hence $(x_0,\sqrt[q]{F(x_0)})$ has degree $r$ over $K$.
\end{proof}

\begin{lem}\label{lem: contains qN}
    The degree set $\mathcal{D}(C/K)$ contains $q\mathbb{N}$.
\end{lem}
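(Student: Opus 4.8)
The plan is to produce, by a single vertical-fiber computation, either a closed point of $C$ of degree $q$ or a $K$-rational point of $C$, and then to finish with \Cref{lem: closed under multiple}.

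First I would use that $K$ is infinite (it is Henselian with value group $\mathbb{Z}$, hence large) while $F$ has only finitely many roots, in order to fix some $x_0 \in K$ with $F(x_0) \neq 0$. Then $x_0$ is not a branch point of the degree-$q$ map $x\colon C \to \mathbb{P}^1$, so the fiber $x^{-1}(x_0)$ meets the affine open $U \simeq V(y^q - F(x)) \smallsetminus \{(\alpha_i,0)\} \subseteq C$ in $\operatorname{Spec} K[y]/(y^q - F(x_0))$, and since $q \neq p$ and $F(x_0)\neq 0$ every point of this fiber is a smooth point of $C$. Next I would apply \Cref{lem: X^p - a factors but better} with $k = K$ and $a = F(x_0) \in K^\times$: either $y^q - F(x_0)$ is irreducible over $K$, or $F(x_0) \in K^{\times q}$. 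In the first case a root $y_0$ of $y^q - F(x_0)$ gives a closed point $(x_0,y_0)$ of $C$ with residue field $K(y_0)$ of degree $q$ over $K$, so $q \in \mathcal{D}(C/K)$. In the second case, writing $F(x_0) = \eta^q$ with $\eta \in K^\times$, the point $(x_0,\eta)$ lies in $U(K) \subseteq C(K)$, so $1 \in \mathcal{D}(C/K)$.

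Finally I would invoke \Cref{lem: closed under multiple}: in the first case it yields $q\mathbb{N} \subseteq \mathcal{D}(C/K)$ immediately, and in the second it yields $\mathbb{N} \subseteq \mathcal{D}(C/K)$, which of course contains $q\mathbb{N}$. There is no genuine difficulty here; the only point worth flagging is that it is unnecessary—and in general impossible—to decide which alternative of \Cref{lem: X^p - a factors but better} actually occurs, since both already force $q\mathbb{N} \subseteq \mathcal{D}(C/K)$: one because the generic vertical fiber is a single degree-$q$ point, the other because that fiber carries a $K$-rational point.
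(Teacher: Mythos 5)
Your proof is correct and takes essentially the same approach as the paper: apply \Cref{lem: X^p - a factors but better} to a vertical fiber $y^q - F(x_0)$, obtain either a degree-$q$ point or a $K$-rational point, and conclude via \Cref{lem: closed under multiple}. The one small difference is that the paper specializes at $x_0 = 0$ without remarking that \Cref{lem: X^p - a factors but better} requires $F(0)\neq 0$, whereas you pick $x_0 \in K$ off the finitely many roots of $F$, which makes the argument robust to that edge case.
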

\begin{proof}
    We show that either $1 \in \mathcal{D}(C/K)$ (and hence equals $\mathbb{N}$) or that $q \in \mathcal{D}(C/K)$, which is sufficient by \zcref{lem: closed under multiple}. Set $x=0$ and consider $y^q = F(0)$.
    By \zcref{lem: X^p - a factors but better}, either $y^q - F(0)$ is irreducible, or there exists $b \in K$ such that $b^q = F(0)$. If $y^q - F(0)$ is irreducible, $(0, F(0)^{1/q})$ has degree $q$, otherwise, the point $(0, b)$ has degree 1.
\end{proof}

\begin{lem}\label{lem: degree of root}
    Suppose $F$ has a separable irreducible factor of degree $r$.  Then, $r\mathbb{N} \subseteq \mathcal{D}(C/K)$.
\end{lem}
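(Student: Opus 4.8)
The plan is to produce a single closed point of $C$ of degree $r$ over $K$ and then invoke \Cref{lem: closed under multiple} to conclude $r\mathbb{N} \subseteq \mathcal{D}(C/K)$. Fix a separable irreducible factor $g$ of $F$ of degree $r$, choose a root $\alpha$ of $g$, and set $K' := K(\alpha) \cong K[x]/(g)$, a separable extension of $K$ of degree $r$. The point to be constructed will lie over the closed point $x = \alpha$ of $\mathbb{P}^1_K$.

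First I would pin down the multiplicity of $\alpha$ in $F$. Factoring $F$ into powers of distinct irreducibles over $K$, the hypothesis that $F$ has no $q$-th powers rules out $g^q \mid F$, so $g$ occurs in $F$ with some multiplicity $k$ with $1 \le k \le q-1$; since $g$ is separable, $\alpha$ then occurs in $F(x)$ with multiplicity $k$, and since $q$ is prime, $\gcd(k,q) = 1$. Thus over $K'$ we may write $F(x) = (x-\alpha)^k h(x)$ with $v_0(h(x)) = 0$ for the valuation $v_0$ of $K(x)$ introduced below.

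The heart of the argument is a ramification computation for the $x$-map $x\colon C \to \mathbb{P}^1_K$, equivalently for $L := K(C) = K(x)[y]/(y^q - F(x))$ over $K(x)$. This extension has degree $q$ because $F$ is not a $q$-th power in $K(x)$ (by \Cref{lem: X^p - a factors but better}, or geometric integrality of $C$). Let $v_0$ be the valuation of $K(x)$ with uniformizer $g(x)$; its residue field is $K'$, of degree $r$ over $K$. Since $q \neq p$, the extension $L/K(x)$ is finite separable, so the fundamental identity gives $\sum_{w \mid v_0} e(w)f(w) = q$. For any $w$ extending $v_0$, the equation $y^q = F(x)$ together with $v_0(h) = 0$ gives $q\,w(y) = k\,e(w)$, so $\gcd(k,q) = 1$ forces $q \mid e(w)$, i.e. $e(w) \ge q$. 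Combined with $\sum e(w)f(w) = q$, there is a unique such $w$, and it has $e(w) = q$, $f(w) = 1$; hence its residue field equals that of $v_0$, namely $K'$. Translating back: $x\colon C \to \mathbb{P}^1_K$ has a unique point $P$ over $x = \alpha$, it is totally (and tamely, as $q \neq p$) ramified, and the residue field of $P$ is $K'$, so $\deg_K P = r$. Then $r \in \mathcal{D}(C/K)$, and \Cref{lem: closed under multiple} gives $r\mathbb{N} \subseteq \mathcal{D}(C/K)$.

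I expect the only genuinely delicate point to be the last one. At a non-smooth point of the affine model $y^q = F(x)$, the fiber of the normalization can acquire residue degree, so it is essential to check that the full degree $q$ is absorbed by ramification; this is exactly what $q \nmid k$ contributes through the identity $\sum_{w\mid v_0} e(w)f(w) = q$. Everything else (the multiplicity bookkeeping, the degree $q$ of $L/K(x)$, the translation between valuations of $K(C)$ and closed points of $C$) is routine.
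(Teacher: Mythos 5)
Your proof is correct, but it takes a genuinely different route from the paper's. The paper's argument is a model-change trick: pick $i$ with $i\ell \equiv 1 \pmod q$ (where $\ell$ is the multiplicity of $\alpha$ in $F$), pass to the $K$-isomorphic curve $y^q = F(x)^i$, and strip $q$-th powers so that $\alpha$ now has multiplicity exactly $1$; the resulting plane model is visibly smooth at the closed point $(\alpha,0)$, which has degree $r$, and \Cref{lem: closed under multiple} finishes. Your argument instead works with the function field $L = K(x)[y]/(y^q - F)$ and the $g$-adic place $v_0$ of $K(x)$, and uses the ramification identity $q\,w(y) = k\,e(w)$ together with $\sum e(w)f(w) = q$ to force a unique totally ramified place above $v_0$ with residue field $K' = K[x]/(g)$. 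Both arguments pivot on the same arithmetic fact ($\gcd(\text{multiplicity}, q) = 1$ because $F$ is $q$-th-power-free and $q$ is prime), but the paper converts it into the existence of a modular inverse while you convert it into divisibility of the ramification index. Your route is more explicit about where the residue degree goes ($f(w)=1$), at the cost of invoking the fundamental identity for places of function fields (which needs $L/K(x)$ separable; your reliance on $q \neq p$ is exactly right there). One small cosmetic point: the intermediate factorization $F = (x-\alpha)^k h(x)$ is written over $K'$ while $v_0$ lives on $K(x)$; the cleaner phrasing is simply $v_0(F) = k$ because $g^k \parallel F$ in $K[x]$, which is all the computation $q\,w(y) = k\,e(w)$ actually uses.
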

\begin{proof}
    Let $\alpha$ be a separable root of degree $r$ and let $\ell$ be the multiplicity of $\alpha$ as a root of $F(x)$.  Since $F$ is assumed to be free of $q$-th powers, let $i$ be a lift of $\ell^{-1} \pmod q$. Consider the isomorphic curve given by $y^q = (F(x))^i$. After eliminating $q$-th powers from $(F(x))^i$, the resulting isomorphic curve is smooth at $(\alpha, 0)$, showing that $\mathcal{D}(C/K)$ contains $\deg \alpha$.
\end{proof}

\subsection{Proofs of main theorems}

\begin{proof}[Proof of \zcref{thm: main thm}]
    By \zcref{lem: degree set is dense}, $\mathcal{D}(C/K) = \mathcal{D}(U/K)$. Let $P \in U(\overline{K})$ and consider $x(P)$. As the collection $\{X_\mathfrak{s}\}$ covers $\mathbb{A}^1(\overline{K})$, there exists some $\mathfrak{s}$ such that $x(P) \in X_\mathfrak{s}$. If $\mathfrak{s}$ is Galois invariant, then \zcref{lem: no points not zero mod q} shows that either $q$ divides the degree of $x(P)$, or that one of the conditions \ref{condition: bS not 0 mod q first lemma}-\ref{condition: F(gamma) mod q first lemma} holds. By \zcref{lem: congruence conditions imply N>r}, if one of the conditions \ref{condition: bS not 0 mod q first lemma}-\ref{condition: F(gamma) mod q first lemma} hold, then the degree set contains $\mathbb{N}_{\geq r_0}$ and is cofinite. Thus, $\mathcal{D}(C/K)$ is not cofinite if and only if the conditions \ref{condition: bS not 0 mod q first lemma}-\ref{condition: F(gamma) mod q first lemma} do not hold. If $\mathfrak{s}$ is not Galois invariant, the degree of $x(P)$ is a multiple of $|\mathcal{O}(\mathfrak{s})|$, and $\deg_K(P)$ is a multiple of the degree of $x(P)$, so that $\deg_K(P) \in |\mathcal{O}(\mathfrak{s})|\mathbb{N}$.
\end{proof}

\begin{thm}\label{thm: alg closed examples even too}
    Let $D$ be a positive integer divisible $q$, and let $\{n_k\} \subseteq \mathbb{N}$ be a finite sequence such that $q \left( \sum_k n_k \right) = D$. Suppose that $q$ is odd, or that the number of times $1$ occurs in the sequence $\{n_k\}$ is even. Then, there exists a superelliptic curve $C/K$ with equation $y^q = F(x)$, where $D = \deg F(x)$, such that
    \begin{align*}
        \mathcal{D}(C/K) = q\mathbb{N} \cup \bigcup_{n_k>1} n_k \mathbb{N}.
    \end{align*}
\end{thm}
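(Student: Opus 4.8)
The plan is to construct $F(x)$ explicitly as a product of factors, one factor for each $n_k$, engineered so that the cluster structure realizes exactly the arithmetic conditions of \Cref{thm: main thm}. For each index $k$, I will introduce a block of $q n_k$ roots arranged into $n_k$ Galois-conjugate clusters of size $q$ each, sitting at a valuation level chosen distinct from the levels used by the other blocks; concretely one takes a polynomial whose roots are the $q n_k$-th roots of $\pi^{a_k}$ for a suitable exponent $a_k$, or a twist thereof, so that the Galois group permutes the $q$ roots inside each small disk cyclically and permutes the $n_k$ disks as a single orbit. This produces a non-Galois-invariant cluster $\mathfrak{s}_k$ with $|\mathcal{O}(\mathfrak{s}_k)| = n_k$ when $n_k > 1$, contributing the term $n_k\mathbb{N}$ to the degree set via \Cref{lem: congruence conditions imply N>r} (case \ref{condition: bS not 0 mod q}, since $|\mathfrak{s}_k| = q$ is not a problem — wait, one must instead arrange the \emph{small} clusters inside to have size $1$ and use a size-$q$ structure one level up); I will need to choose the combinatorics so that the only Galois-invariant clusters are the full set $\{\alpha_i\}$ and possibly nested clusters, each of which must satisfy conditions $(i)$ and $(ii)$ of \Cref{thm: main thm} to keep the $qn_k$-contribution from collapsing to $\mathbb{N}$.

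The key steps, in order: (1) fix valuation levels $v_1 < v_2 < \cdots$ for the blocks, widely separated, and for the $k$-th block take roots forming $n_k$ clusters of size $q$, each cluster being a single Galois orbit of size $qn_k$ as $\mathrm{Gal}(K^t/K)$ acts; (2) verify using \Cref{lem: tamely ramified generalized krasners} and \Cref{lem: tamely ramified extensions} that the orbit sizes are as claimed and that each block's union-cluster $\mathfrak{s}_k$ is \emph{not} Galois invariant while the nested structure produces Galois-invariant clusters only as unions $\mathfrak{s}_1 \cup \cdots \cup \mathfrak{s}_j$; (3) compute $c_{\mathfrak{s}}$ and $|\mathfrak{s}|$ for each Galois-invariant $\mathfrak{s}$ using \Cref{lem: valuation of polynomial}, and choose the exponents $a_k$ and the leading coefficient $a_d$ so that $|\mathfrak{s}| \equiv 0$, $c_\mathfrak{s} \not\equiv 0 \pmod q$, $\val(F(0)) \not\equiv 0 \pmod q$, and (when $\gamma_\mathfrak{s}\in X_\mathfrak{s}$) $\val(F(\gamma_\mathfrak{s})) \not\equiv 0 \pmod q$, invoking \Cref{rem: compute v(F(gamma))} to make the last condition a linear congruence in the chosen data; (4) conclude from \Cref{thm: main thm} that $\mathcal{D}(C/K) \subseteq q\mathbb{N} \cup \bigcup_{n_k>1} n_k\mathbb{N}$, then from \Cref{lem: contains qN}, \Cref{lem: congruence conditions imply N>r}, and \Cref{lem: degree of root} that the reverse containment holds; (5) finally observe that each $n_k = 1$ block can be absorbed into the constant-term count (it just adds $q$ roots at some level, contributing to $\val(F(0))$ but not a new $\mathbb{N}$-term), and this is where the parity hypothesis enters: when $q = 2$, a single linear factor $(x - \alpha)$ with $\alpha \in K$ forces a rational point of degree $1$, so such factors must be paired up to keep $1 \notin \mathcal{D}(C/K)$, whereas for odd $q$ one has freedom to twist by a non-$q$-th-power unit and avoid degree-$1$ points.

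The main obstacle I expect is step (3): the modular conditions on $c_\mathfrak{s}$, $\val(F(0))$, and $\val(F(\gamma_\mathfrak{s}))$ must hold \emph{simultaneously} for \emph{all} Galois-invariant clusters $\mathfrak{s}$, of which there is a whole nested chain, and these quantities are coupled because $c_{\mathfrak{s}}$ for an inner cluster involves $\val(\alpha - \beta)$ over roots $\beta$ in all outer blocks. The resolution is to exploit the wide separation of the valuation levels: if $v_{k+1} - v_k$ is large and the number of roots in each block is controlled, then $c_{\mathfrak{s}_1 \cup \cdots \cup \mathfrak{s}_j}$ is a linear function of the free exponents $a_1,\dots,a_\ell$ and of $\val(a_d)$ with predictable coefficients, so the system of congruences modulo $q$ becomes triangular and can be solved greedily from the innermost cluster outward — with the $q = 2$ parity constraint on the number of degree-$1$ factors being exactly the one obstruction that cannot be dissolved by a clever choice of exponents, hence its appearance in the hypothesis. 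A secondary technical point is confirming $\Delta_v$-regularity is \emph{not} needed here: we never build a model, we only apply \Cref{thm: main thm} and the explicit point constructions, so the argument goes through verbatim even in the non-$\Delta_v$-regular range.
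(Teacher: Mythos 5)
Your high-level strategy — build $F(x)$ from blocks of roots, one per $n_k$, placed at widely separated valuation levels so that the cluster structure is controlled, and then check the conditions of \Cref{thm: main thm} — is the same organizing idea as the paper's. But there are several concrete gaps that the paper's argument is specifically engineered to avoid, and your proposal as written would not close them.

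First, you plan to \emph{conclude} via \Cref{thm: main thm}, but that theorem requires every root of $F(x)$ to be tamely ramified, and the $n_k$ are arbitrary positive integers — in particular not assumed coprime to $p$. The paper's construction uses factors $x^{n_k}-\pi$ whose roots can be wildly ramified, and the paper flags this explicitly: the hypotheses of \Cref{lem: no points not zero mod q} may fail. The workaround in the paper is \emph{not} to cite \Cref{thm: main thm} but to re-run the inner argument for the specific $F(x)$ at hand, observing that the third case of \Cref{lem: no points not zero mod q} (the one that needs the shifting argument, hence tame ramification) never actually arises, and verifying $c_{\mathfrak{s}}\in\mathbb{Z}$ by a direct computation rather than through \Cref{lem: cS integer}. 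Your proposal does not address this, and the remark about $\Delta_v$-regularity does not substitute for it — wild ramification is a different obstruction.

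Second, your concrete choice of block polynomial doesn't produce what you need. You take the $qn_k$-th roots of $\pi^{a_k}$ and claim this gives $n_k$ clusters of size $q$ forming a single Galois orbit. With $\gcd(a_k, qn_k)=1$ and algebraically closed residue field, the $qn_k$ roots reduce to distinct residues (when $p\nmid qn_k$), so you get $qn_k$ singleton clusters, not $n_k$ clusters of size $q$; and the minimal polynomial has degree $qn_k$, so \Cref{lem: degree of root} hands you $qn_k\mathbb{N}$, not $n_k\mathbb{N}$. You instead cite \Cref{lem: congruence conditions imply N>r} for the $n_k\mathbb{N}$ term, but that lemma constructs points of all sufficiently large degree coprime to $q$ — it does not produce points of degree exactly $n_k$, and would in fact make the degree set cofinite, the opposite of what this theorem wants. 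The paper sidesteps all of this by including an honest irreducible factor of degree $n_k$ (namely $x^{n_k}-\pi$), so that \Cref{lem: degree of root} directly gives $n_k\mathbb{N}\subseteq\mathcal{D}(C/K)$, and then shows that points with $x$-coordinate near those roots have degree divisible by $n_k$ because the denominator of $\val(x(P))$ must be $n_k$.

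Third, your account of the parity hypothesis for $q=2$ is not the right mechanism. Degree-$1$ linear factors do not appear in the paper's $F(x)$ at all: the $n_k=1$ terms are absorbed into the exponent $c$ of the single factor $x^c-\pi^a$. The real constraint is $2$-adic: one needs $c=2\sum_{n_k=1}n_k\equiv 0\pmod 4$ together with $a\equiv 2\pmod 4$ so that $\frac{a}{c}$, in lowest terms, has \emph{even} denominator, forcing even degree for points near that block. Evenness of $c/4$ is exactly the ``number of $1$'s is even'' hypothesis, and this is tied to the obstruction from \cite{Degrees-on-Varieties}*{Lemma 5.8} (no hyperelliptic curve with $\deg F\equiv 2\pmod 4$ over such a $K$ has degree set $2\mathbb{N}$), not to avoiding a rational Weierstrass point.

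Finally, the ``triangular/greedy'' solution of the congruences is where the paper's specific design pays off: with one ``outer'' factor $x^c-\pi^a$ and inner factors $x^{n_k}-\pi$, every Galois-invariant cluster is a valuation cutoff set, and each $qm_k$-root block contributes exactly $q$ to $c_{\mathfrak{s}}$, so $c_{\mathfrak{s}}\equiv 1+a\pmod q$ for \emph{all} the nested clusters simultaneously. The single choice $a\equiv 1\pmod q$ then clears every congruence at once; there is no actual triangular system to solve. Your sketch acknowledges the coupling but does not exhibit a construction that makes it dissolve this cleanly.
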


The added condition when $q$ is even is necessary, as when the residue field of $K$ is algebraically closed, \cite{Degrees-on-Varieties}*{Lemma 5.8} shows that there does not exist a curve $y^2 = F(x)$ such that $\mathcal{D}(C/K) = 2\mathbb{N}$ with $\deg F(x) \equiv 2 \pmod{4}$.

\begin{proof}
    We begin with the case where $q$ is odd.  Define $m_k = \{ n_k \mid n_k >1\}$, and reorder so that both $n_k$ and $m_k$ are weakly increasing. We construct a superelliptic curve $C/K$ with equation $y^q = F(x)$ such that
    \begin{align*}
        \mathcal{D}(C/K) = q\mathbb{N} \cup \bigcup_{n_k> 1} n_k \mathbb{N} = q\mathbb{N} \cup \bigcup_{k} m_k \mathbb{N}.
    \end{align*}
    Define $c \coloneqq D - q\sum_{k}m_k$ and let $a$ be relatively prime to $c$, congruent to $1 \pmod q$, and less than $-c$. As $D$ is a multiple of $q$, so is $c$. Define $F(x)$ by
    \begin{align}\label{eq: definition of F(x)}
        F(x) \coloneqq \pi (x^c - \pi^a)\prod_{n_k>1} \prod_{i=1}^q (x^{n_k} - \pi)
    \end{align}
    and consider the curve $C/K$ defined by $y^q = F(x)$. By \zcref{lem: contains qN} and \zcref{lem: degree of root}, we have that $q\mathbb{N} \cup \bigcup_{n_k > 1} n_k \mathbb{N} \subseteq \mathcal{D}(C/K)$.  We show that $\mathcal{D}(C/K) \subseteq q\mathbb{N} \cup \bigcup_{n_k > 1} n_k \mathbb{N}$. Let $P \in U(\overline{K})$ and let $\mathfrak{s}$ be such that $x(P) \in X_{\mathfrak{s}}$. By \zcref{lem: types of S},  $X_{\mathfrak{s}} \subseteq B_{|\alpha|}(\alpha)$ or there exists $\alpha_\text{min}$ such that ${\mathfrak{s}}$ indexes the set $\{ \alpha \mid \val(\alpha) \geq \val(\alpha_\text{min}) \}$. 
    
    Consider the first case. By construction, the roots of $F(x)$ have valuations $\frac{a}{c}$ or $\frac{1}{n_k}$, and as $x(P) \in B_{|\alpha|}(\alpha)$, $\val(x(P)) = \frac{a}{c}$ or $\val(x(P)) = \frac{1}{n_k}$. Let $d$ denote the denominator of $\val(x(P))$. We have a chain of divisibility conditions: $d$ divides $[K(x(P)):K]$, which divides $[K(P):K]$, and as $d = n_k$ or $q \mid c = d$, $[K(P):K] \in q \mathbb{N} \cup \bigcup_k n_k \mathbb{N}$.
    
    In the second case, there exists $\alpha_\text{min}$ such that ${\mathfrak{s}}$ indexes the set $\{ \alpha \mid \val(\alpha) \geq \val(\alpha_\text{min}) \}$. We see that ${\mathfrak{s}}$ is Galois invariant. Note that $F(x)$ may have wildly ramified roots, so that the assumptions of \zcref{lem: no points not zero mod q} fail. For this specific $\mathfrak{s}$, however, all $P \in U(\overline{K})$ fall into case \ref{case: pigeonhole} or case \ref{case: valuation of root}, and the proof proceeds as written provided we verify the conclusion of \zcref{lem: cS integer} (that $c_\mathfrak{s}$ is an integer). Let $k_0$ be the largest index such that $\frac{1}{m_{k_0}} < \val(\alpha_\text{min})$. We compute
    \begin{align*}
        c_{\mathfrak{s}} &= \val(\pi) + \sum_{\beta \in \mathfrak{s}^c} \val(\alpha_{\min} - \beta) = 1 + \sum_{\beta \in \mathfrak{s}^c} \val(\beta) \\
        &= 1 + c \left( \frac{a}{c} \right) + qm_1 \left( \frac{1}{m_1} \right) + \ldots + qm_{k_0} \left( \frac{1}{m_{k_0}} \right) \in \mathbb{Z}.
    \end{align*}
    As $a \equiv 1 \pmod q$, $c_{\mathfrak{s}} \equiv 2 \pmod q$, and as $q \geq 3$, $c_{\mathfrak{s}} \not \equiv 0 \pmod q$. We compute $|\mathfrak{s}|$. The roots of $F(x)$ have valuations $\frac{a}{c} < \frac{1}{m_1} \leq \frac{1}{m_2} \leq \cdots$.
    The number of roots of each valuation is a multiple of $q$, and thus the number of roots $\alpha$ such that $\val(\alpha_\text{min}) \leq \val(\alpha)$ is a multiple of $q$. Thus, $|\mathfrak{s}| \equiv 0 \pmod q$.
    Finally, we check that $\val(F(0)) \not \equiv 0 \pmod q$, which is necessary for case \ref{case: valuation of root}. Let $k_1$ be the largest index in the sequence $\{m_k\}$. By construction, $\val(F(0)) = \val(\pi^{a+1} \pi^{qk_1}) \equiv a + 1 \pmod q$
    and as before, $1 + a \not \equiv 0 \pmod q$.
    As $|\mathfrak{s}| \equiv 0 \pmod q$, $c_\mathfrak{s} \not \equiv 0 \pmod q$, and $\val(F(0)) \not \equiv 0 \pmod q$, we conclude that for all $P \in U(\overline{K})$ such that $x(P) \in X_\mathfrak{s}$, $q \mid \deg P$, as in (the contrapositive of) \zcref{lem: no points not zero mod q}.
    
    When $q$ is 2, we construct $F(x)$ as in \zcref{eq: definition of F(x)}, with slight modifications. Note that $c_{\mathfrak{s}} \equiv 1 + a \pmod 2$, and thus if we want $2 \nmid c_{\mathfrak{s}}$, we must have $a \equiv 0 \pmod 2$. Thus, choose $a$ such that $a \equiv 2 \pmod 4$ and is smaller than $-cn_k$ for all $k$. Then, as
    \begin{align*}
        c = D - 2\sum_{n_k > 1} n_k = 2 \sum_{n_k = 1} n_k
    \end{align*}
    and the number of $k$ such that $n_k = 1$ is even by assumption, $c \equiv 0 \pmod 4$, so that $\frac{a}{c}$ (in lowest terms) has even denominator. The rest of the proof is similar to when $q$ is odd.
\end{proof}

\begin{proof}[Proof of \zcref{thm: nice generate examples theorem}]
    If there exists $k$ such that $n_k = 1$, pick a superelliptic curve $C/K$ with a rational point. Otherwise, the proof is immediate from \zcref{thm: alg closed examples even too}.
\end{proof}

\section{Examples}

Our results also provide a method for computing degree sets.  Although our lemmas assume algebraically closed residue field, this example is computed over $\mathbb{Q}_7$, as the points constructed of minimal degree (i.e., do not arise from having degree a multiple of a degree already in the degree set) are totally ramified, so the degree set is unchanged over $\mathbb{Q}_7^{\text{unr}}$.

\begin{example}\label{ex: computed degree set}
    Let $\alpha = 7^{\frac{1}{2}} + 7^{\frac{3}{5}}$, $\beta = 2\cdot7^{\frac{1}{2}} + 7^{\frac{3}{5}}$, and $\gamma = 3 \cdot 7^{\frac{1}{2}} + 7^{\frac{3}{5}}$ as elements of $\overline{\mathbb{Q}_7}$, and let $f_1(x)$, $f_2(x)$, and $f_3(x)$ be the minimal polynomials of $\alpha$, $\beta$, and $\gamma$ respectively over $\mathbb{Q}_7$. Let $F(x) = 7 f_1(x)f_2(x)f_3(x)$, and let $C/\mathbb{Q}_7$ be the curve given by $y^3 = F(x)$. Then, we have
    \begin{align*}
        \mathcal{D}(C/\mathbb{Q}_7) = 3\mathbb{N} \cup 2\left( \{8,11,13,14\} \cup \mathbb{N}_{>15}\right) \cup 10\mathbb{N} \subseteq 3\mathbb{N} \cup 2 \mathbb{N}.
    \end{align*}
\end{example}
\begin{proof}
   Fix a primitive 5th root of unity $\zeta_5$ and let $\mathfrak{s}$ be a cluster for $F(x)$.  By \zcref{lem: types of S}, all roots of $\mathfrak{s}$ are contained in $B_{|\alpha|}(\alpha)$, or there exists $\alpha_\text{min}$ such that $\mathfrak{s} = \{ \alpha \mid \val(\alpha) \geq \val(\alpha_\text{min})\}$.
    
    We first consider the case where there exists $\alpha_\text{min}$ such that $\mathfrak{s}=\{ \alpha \mid \val(\alpha) \geq \val(\alpha_\text{min})\}$. As all roots of $F(x)$ have the same valuation, $\mathfrak{s}$ must contain all roots of $F(x)$. Then, $|\mathfrak{s}| = 30$ and $c_{\mathfrak{s}} = 1$, and hence we have that
   \begin{align*}
        \val(y(P)) = \frac{30}{3}\val(x(P) - \alpha) + \frac{1}{3}
   \end{align*}
   so that by \zcref{lem: sol in interval}, $3$ divides the denominator of $\val(y(P))$, and hence $\deg P \in 3\mathbb{N}$.
   Thus, we can suppose that all roots of $\mathfrak{s}$ are contained in $B_{|\alpha|}(\alpha)$. Each $B_{|\alpha|}(\alpha)$ contains exactly $5$ roots of $F(x)$, and as all roots in $B_{|\alpha|}(\alpha)$ are equidistant, either $|\mathfrak{s}| = 5$ or $|\mathfrak{s}| = 1$.
   
   If $|\mathfrak{s}| = 1$, then there exists $\alpha \in \mathfrak{s}$ such that $x(P)$ is closer to $\alpha$ than all Galois conjugates of $\alpha$, and Krasner's lemma implies that $10 \mid \deg x(P) \mid \deg P$, showing that $\deg P \in 10\mathbb{N}$. As the roots of $F(x)$ have degree 10, by \zcref{lem: degree of root}, $\mathcal{D}(C/K)$ contains $10\mathbb{N}$.

   Suppose $|\mathfrak{s}| = 5$. We detail only the case where $\mathfrak{s} = \{7^{\frac{1}{2}} + \zeta_5^i7^{\frac{3}{5}}\}_{i = 1}^{5}$, as the other cases are similar. We have that $c_{\mathfrak{s}} = \frac{27}{2}$. Fix $\alpha \in \mathfrak{s}$.
   By \zcref{lem: types of S}, we have that $X_{\mathfrak{s}} \subseteq B_{|\alpha|}(\alpha)$, and applying \zcref{lem: tamely ramified generalized krasners}, we find that for all $\eta \in B_{|\alpha|}(\alpha)$, $K(7^{\frac{1}{2}}) \subseteq K(\eta)$, so that if $x(P) \in X_{\mathfrak{s}}$, then $K(7^{\frac{1}{2}}) \subseteq \textbf{k}(x(P))$. We extend to $K(7^{\frac{1}{2}})$ and renormalize the valuation, remembering that we have divided the degree of our point by 2.  
   Following \zcref{lem: congruence conditions imply N>r} over $K(7^{\frac{1}{2}})$, we have that $\gamma_\mathfrak{s} = 7^{\frac{1}{2}}$ and $(M, N) = (1, \frac{6}{5})$. If $\val(x(P) - \gamma_\mathfrak{s}) = \frac{a}{r}$, then
   \begin{align*}
       \val(y(P)) = \frac{5}{3} \cdot \frac{a}{r} + \frac{27}{3} = \frac{5a + 27r}{3r}
   \end{align*}
   and so 3 divides the denominator of $\val(y(P))$ unless $5a + 27r \equiv 0 \pmod{3}$, which occurs if and only if $a \equiv 0 \pmod{3}$. We search for $r \in \mathbb{N}$ that produce $\frac{a}{r} \in (1, \frac{6}{5})$ such that $a \equiv 0 \pmod 3$. For $r$ large enough, by \zcref{lem: sol in interval}, this condition is always satisfied, and a computer finds the condition is satisfied for $8$, $11$, $13$, $14$, and all integers greater than $15$. 
\end{proof}

\begin{remark}\label{rem: dokchitser}
The code of \cite{Regular-Models} applied to \zcref{ex: computed degree set} gives the following diagram.

$$
\begin{tikzpicture}[xscale=0.8,yscale=0.7,
  l1/.style={shorten >=-1.3em,shorten <=-0.5em,thick},
  l2/.style={shorten >=-0.3em,shorten <=-0.3em},
  lfnt/.style={font=\tiny},
  rightl/.style={right=-3pt,lfnt},
  mainl/.style={scale=0.8,above left=-0.17em and -1.5em},
  facel/.style={scale=0.5,blue,below right=-0.5pt and 6pt},
  redbull/.style={red,label={[red,scale=0.6,above=-0.17]#1}}]
\draw[l1] (0.00,0.00)--(2.96,0.00) node[mainl] {6} node[facel] {$F_1$};
\node[redbull=a] at (0.00,0.00) {$\bullet$};
\node[redbull=b] at (0.50,0.00) {$\bullet$};
\node[redbull=c] at (1.00,0.00) {$\bullet$};
\draw[l2] (1.50,0.00)--node[rightl] {3} (1.50,0.66);
\draw[l2] (2.30,0.00)--node[rightl] {3} (2.30,0.66);
\end{tikzpicture}
$$

Note that this method is unable to fully compute the special fiber, and thus is insufficient for computing the degree set in this case.  Using the main result of \cite{Degrees-on-Varieties} and the fact that \zcref{ex: computed degree set} will have the same degree set over $\mathbb{Q}_7^\text{unr}$, we see that every component of the special fiber must have multiplicity divisible by 2 or 3.

\end{remark}

\bibliographystyle{alpha}
\bibliography{refs}

@article {Degrees-on-Varieties,
    AUTHOR = {Creutz, Brendan and Viray, Bianca},
     TITLE = {Degrees of points on varieties over {H}enselian fields},
   JOURNAL = {Trans. Amer. Math. Soc.},
  FJOURNAL = {Transactions of the American Mathematical Society},
    VOLUME = {378},
      YEAR = {2025},
    NUMBER = {1},
     PAGES = {259--278},
      ISSN = {0002-9947,1088-6850},
   MRCLASS = {14G20 (11G25 13H15 13J15 14G05)},
  MRNUMBER = {4840304},
       DOI = {10.1090/tran/9313},
       URL = {https://doi.org/10.1090/tran/9313},
}

@article {Regular-Models,
    AUTHOR = {Dokchitser, Tim},
     TITLE = {Models of curves over discrete valuation rings},
   JOURNAL = {Duke Math. J.},
  FJOURNAL = {Duke Mathematical Journal},
    VOLUME = {170},
      YEAR = {2021},
    NUMBER = {11},
     PAGES = {2519--2574},
      ISSN = {0012-7094,1547-7398},
   MRCLASS = {11G20 (11G10 14D10 14F20 14H45)},
  MRNUMBER = {4302549},
MRREVIEWER = {J\polhk edrzej\ Garnek},
       DOI = {10.1215/00127094-2020-0079},
       URL = {https://doi.org/10.1215/00127094-2020-0079},
}

@article {polydisks,
    AUTHOR = {Ducros, Antoine},
     TITLE = {Toute forme mod\'er\'ement ramifi\'ee d'un polydisque ouvert
              est triviale},
   JOURNAL = {Math. Z.},
  FJOURNAL = {Mathematische Zeitschrift},
    VOLUME = {273},
      YEAR = {2013},
    NUMBER = {1-2},
     PAGES = {331--353},
      ISSN = {0025-5874,1432-1823},
   MRCLASS = {32P05 (12F99 13A02 32A65)},
  MRNUMBER = {3010163},
MRREVIEWER = {N.\ Sankaran},
       DOI = {10.1007/s00209-012-1007-y},
       URL = {https://doi.org/10.1007/s00209-012-1007-y},
}

@article {cluster-diagrams-users-guide,
    AUTHOR = {Best, Alex J. and Betts, L. Alexander and Bisatt, Matthew and
              van Bommel, Raymond and Dokchitser, Vladimir and Faraggi, Omri
              and Kunzweiler, Sabrina and Maistret, C\'eline and Morgan,
              Adam and Muselli, Simone and Nowell, Sarah},
     TITLE = {A user's guide to the local arithmetic of hyperelliptic
              curves},
   JOURNAL = {Bull. Lond. Math. Soc.},
  FJOURNAL = {Bulletin of the London Mathematical Society},
    VOLUME = {54},
      YEAR = {2022},
    NUMBER = {3},
     PAGES = {825--867},
      ISSN = {0024-6093,1469-2120},
   MRCLASS = {11G20 (11G10 14D10 14G20 14H45 14Q05)},
  MRNUMBER = {4453743},
MRREVIEWER = {Davide\ Lombardo},
       DOI = {10.1112/blms.12604},
       URL = {https://doi.org/10.1112/blms.12604},
}

@incollection {cluster-diagrams,
    AUTHOR = {Dokchitser, Tim and Dokchitser, Vladimir and Maistret,
              C\'eline and Morgan, Adam},
     TITLE = {Semistable types of hyperelliptic curves},
 BOOKTITLE = {Algebraic curves and their applications},
    SERIES = {Contemp. Math.},
    VOLUME = {724},
     PAGES = {73--135},
 PUBLISHER = {Amer. Math. Soc., [Providence], RI},
      YEAR = {2019},
      ISBN = {978-1-4704-4247-7},
   MRCLASS = {11G20 (05C75 14H45)},
  MRNUMBER = {3916736},
MRREVIEWER = {John\ L.\ Boxall},
       DOI = {10.1090/conm/724/14586},
       URL = {https://doi.org/10.1090/conm/724/14586},
}

@article {models-of-hyperelliptic-curves,
    AUTHOR = {Faraggi, Omri and Nowell, Sarah},
     TITLE = {Models of hyperelliptic curves with tame potentially
              semistable reduction},
   JOURNAL = {Trans. London Math. Soc.},
  FJOURNAL = {Transactions of the London Mathematical Society},
    VOLUME = {7},
      YEAR = {2020},
    NUMBER = {1},
     PAGES = {49--95},
      ISSN = {2052-4986},
   MRCLASS = {11G20 (14H25)},
  MRNUMBER = {4201122},
MRREVIEWER = {John\ B.\ Little},
       DOI = {10.1112/tlm3.12023},
       URL = {https://doi.org/10.1112/tlm3.12023},
}

@article {original-cluster-diagrams,
    AUTHOR = {Dokchitser, Tim and Dokchitser, Vladimir and Maistret,
              C\'eline and Morgan, Adam},
     TITLE = {Arithmetic of hyperelliptic curves over local fields},
   JOURNAL = {Math. Ann.},
  FJOURNAL = {Mathematische Annalen},
    VOLUME = {385},
      YEAR = {2023},
    NUMBER = {3-4},
     PAGES = {1213--1322},
      ISSN = {0025-5831,1432-1807},
   MRCLASS = {11G20 (11G10 14D10 14F20 14H45)},
  MRNUMBER = {4566695},
MRREVIEWER = {J\polhk edrzej\ Garnek},
       DOI = {10.1007/s00208-021-02319-y},
       URL = {https://doi.org/10.1007/s00208-021-02319-y},
}

@article{index-special-fiber,
  title={The index of an algebraic variety},
  author={Gabber, Ofer and Liu, Qing and Lorenzini, Dino},
  journal={Invent. Math.},
  volume={192},
  number={3},
  pages={567--626},
  year={2013},
  publisher={Springer},
  doi={10.1007/s00222-012-0418-z}
}

@book{pth-power-irreducible,
  title={Algebra},
  author={Lang, Serge},
  edition={3},
  series={Graduate Texts in Mathematics},
  volume={211},
  year={2002},
  publisher={Springer-Verlag},
  address={New York},
  isbn={0-387-95385-4},
  pages={xv+914}
}

@book{unique-tamely-ramified-extension,
  title={Algebraic Number Theory},
  author={Lang, Serge},
  edition={2},
  series={Graduate Texts in Mathematics},
  volume={110},
  year={2013},
  publisher={Springer-Verlag},
  address={New York},
  isbn={0-387-94225-4},
  pages={xiii+357}
}

@article{new-points,
  title={New points on curves},
  author={Liu, Qing and Lorenzini, Dino},
  journal={Acta Arith.},
  volume={186},
  number={2},
  pages={101--141},
  year={2018},
  doi={10.4064/aa170322-23-8}
}

@article{henselian-=>-large,
  title={Henselian implies large},
  author={Pop, Florian},
  journal={Ann. of Math. (2)},
  volume={172},
  number={3},
  pages={2183--2195},
  year={2010},
  doi={10.4007/annals.2010.172.2183}
}

@incollection { baker-slope-formula ,
    AUTHOR = {Baker, Matthew and Payne, Sam and Rabinoff, Joseph},
     TITLE = {On the structure of non-{A}rchimedean analytic curves},
 BOOKTITLE = {Tropical and non-{A}rchimedean geometry},
    SERIES = {Contemp. Math.},
    VOLUME = {605},
     PAGES = {93--121},
 PUBLISHER = {Amer. Math. Soc., Providence, RI},
      YEAR = {2013},
      ISBN = {978-1-4704-1021-6},
   MRCLASS = {14G22 (32P05)},
  MRNUMBER = {3204269},
MRREVIEWER = {Alessandra\ Bertapelle},
       DOI = {10.1090/conm/605/12113},
       URL = {https://doi.org/10.1090/conm/605/12113},
}

\end{document}